
\documentclass{amsart}

\usepackage{hyperref,   amssymb,amsbsy,amsfonts,amsthm,latexsym,amsopn,amstext, amsxtra,euscript,amscd}

\newtheorem{thm}{Theorem}
\newtheorem{lem}{Lemma}
\newtheorem{cor}{Corollary}

\newtheorem{exmp}{Example}
\newtheorem{rem}{Remark}

\newcommand{\set}[1]{{\left\{#1\right\}}}    

\def\C{\mathbb C}
\def\Q{\mathbb Q}
\def\F{\mathbb F}

\def\Z{\mathbb Z}
\def\O{\mathcal O}
\def\R{\mathcal R}

\def\La{\Lambda}
\def\Th{\theta}
\def\th{\theta}

\def\om{\omega}

\def\<{<}
\def\>{>}
\def\O{\mathcal O}

\def\L{\Lambda}

\title[Theta functions for codes over imaginary quadratic fields]{Theta functions and symmetric weight enumerators for codes over imaginary quadratic fields}

\author{T. Shaska}
\address{Department of Mathematics and Statistics, Oakland University, Rochester, MI, 48309.}

\author{C. Shor}
\address{Department of Mathematics, Western New England University, Springfield, MA 01119}

\begin{document}

\begin{abstract}
In this paper we continue the study of codes over imaginary quadratic fields and their weight enumerators and theta functions.  We present new examples of non-equivalent codes over rings of characteristic $p=2$ and $p=5$ which have the same theta functions.  We also look at a generalization of codes over imaginary quadratic fields, providing examples of non-equivalent pairs with the same theta function for $p=3$ and $p=5$.
\end{abstract}

\maketitle

\section{Introduction}

Let $\ell>0$ be a square-free integer congruent to 3 modulo 4,  $K=\Q(\sqrt{-\ell })$ be the imaginary quadratic field, and $\O_K$ its ring of integers.  Codes, Hermitian lattices, and their theta-functions over rings $\R:=\O_K / p \O_K$, for small primes $p$, have been studied by many authors, see \cite{ba}, \cite{ms1}, \cite{ms2}, among others. In \cite{ba}, explicit descriptions of theta functions and MacWilliams identities are given for $p=2, 3$.

If $p=2$ then the image $\O_K/2\O_K$ of the projection $\rho_{\ell}:\O_K \rightarrow \O_K/2\O_K $ is $\F_4$  (resp., $\F_2 \times \F_2$) if $\ell \equiv 3 \mod 8$ (resp.,  $\ell \equiv 7 \mod 8$).  Let $\R$  be a ring isomorphic to $\F_4$  or $\F_2 \times \F_2$ and $C$ a linear code over $\R$ of length $n$. An admissible level $\ell$ is an $\ell$ such that  $\ell \equiv 3 \mod 8$ if
$\R$ is isomorphic to $\F_4$ or $\ell \equiv 7 \mod 8$ if $\R$ is isomorphic to $\F_2 \times \F_2$. Fix an admissible $\ell$ and define $\L_{\ell} (C):= \set { x \in \O_K^n : \rho_\ell (x) \in C}.$
Then, from \cite{ch}, the \textbf{level $\ell$ theta function} $\Th_{\L_{\ell} (C) } (q) $ of the lattice $\L_{\ell} (C)$ is given as the symmetric weight enumerator $swe_C $ of $C$, evaluated on the theta functions defined on cosets of $\O_K/2\O_K$.  Interestingly, an example of two codes of length $n=3$ defined over $\mathbb{F}_2\times\mathbb{F}_2$ that have different symmetric weight enumerator polynomials and the same theta function was given.

In \cite{SS} and \cite{sh_sh}, the above constructions are extended to consider theta functions of codes defined over $\R=\mathbb{F}_p\times\mathbb{F}_p$ or $\mathbb{F}_{p^2}$, for any prime $p$.  These constructions are extended further in \cite{dkl} for codes over $\R=\mathbb{F}_{p^e}\times\mathbb{F}_{p^e}$ or $\mathbb{F}_{p^{2e}}$.  In particular, the theta function $\th_{\La_{\ell}(C)}(q)$ of such a code is equal to the complete weight enumerator of the code evaluated at theta functions of cosets of $\R$.

These constructions naturally lead to two questions.  
\\

i) How do the theta functions of the same code $C$ vary for different levels $\ell$?

ii) How do the theta functions of different codes vary?  Can different codes have the same theta functions for any (or all) levels $\ell$?
\\

The first question is answered for $p=2$ in \cite{SV} and for general $p$ in \cite{sh_sh}.  For a code $C$ defined over $\R$ and for all admissible $\ell, \ell^\prime$ such that $\ell>\ell^\prime,$ we have that \[ \Th_{\Lambda_\ell(C)}(q) =\Th_{\Lambda_{\ell^\prime}(C)}(q)+ {\mathcal O} \left(q^{\frac {\ell^\prime+1}{4}}\right). \]

The second question has been approached a few ways.  Suppose $p=2$ and let $C$ be a code of length $n$ defined over $R=\mathbb{F}_2\times\mathbb{F}_2$ or $\mathbb{F}_4$.  In \cite{SV}, which is included here as Theorem~\ref{thm:p=2-codes-theorem}, for $\ell$ large enough, non-equivalent codes have different theta functions.  Interestingly, the method used in the proof of that theorem does not work for larger primes, though in \cite{sh_sh} we were able to generalize the above method to begin to describe the situations (in terms of $\ell$ and $n$) where more examples of different codes with the same theta function could exist.

The focus of this paper is to present explicit examples of pairs of different codes with the same theta function.  For the case of $p=2$, a family of pairs of codes with this property defined over $\mathbb{F}_2\times\mathbb{F}_2$ of lengths $n\geq2$ and level $\ell=7$ is given in Theorem~\ref{thm:family-of-pairs-of-codes}.  Another pair of codes defined over $\mathbb{F}_2\times\mathbb{F}_2$ of length $n=3$ and level $\ell=7$ is given in Example~\ref{ex:new-length3-codes}.  And a third pair of codes, this time defined over $\mathbb{F}_5\times\mathbb{F}_5$, of length $n=2$ and level $\ell=19$ is given in Example~\ref{ex:p=5-codes-same-theta}.

Additionally, for the cases of $p=3$ and $p=5$, we also consider structures which we call ``$\mathbb{F}_p$-submodule codes,'' which are $\mathbb{F}_p$-submodules of $\R^n$ rather than $\R$-submodules.  This looser restriction leads to many examples which we present for $p=3$ and $p=5$.  While these aren't actually codes, we include the results here with the idea that there is perhaps a way to use these structures to create pairs of codes the same property.


This paper is organized as follows.  In Section~\ref{sec:preliminaries}, we describe lattices, theta functions, and the method known as Construction A by which one creates a theta function from a code for any prime $p$.  In Section~\ref{sec:non-equiv-codes-same-theta-fn}, we then consider the question of whether there exist different codes with the same theta function, first looking at $p=2$. Specific examples are given.  The cases of $p=3$ and $p=5$ follow, and included within is the notion of a $\mathbb{F}_p$-submodule code.  More specific examples are given.  Finally, Section\ref{sec:concluding-remarks} contains some concluding remarks and ideas for future work.


\bigskip

\section{Preliminaries}\label{sec:preliminaries}

In this section we give a brief overview of codes over imaginary quadratic fields.  Most of the material of this section can be found in \cite{SV} or \cite{sh_sh}. 

Let $\ell>0$ be a square free integer and $K=\Q(\sqrt{-\ell })$ be the imaginary quadratic field with discriminant $d_K$. Recall that $ d_K=   -\ell$ if $\ell\equiv 3 \mod 4,$ and $d_K=  -4\ell$ otherwise.  Let $\O_K$ be the ring of integers of $K$.

\subsection{Theta functions of lattices over $K$}

A  lattice $\La$ over $K$ is an $\O_K$-submodule of $K^n$ of full rank.  The inner product is defined as   $x \cdot y := \sum_{i=1}^n x_i   y_i$.  
   The theta series of a lattice $\L$ in $K^n$ is defined as 
\[  \Th_\L \left(\tau \right) := \sum_{z \in \L} e^{\pi i\tau z\cdot\bar{z}},\] 
where $ \tau \in H =\set { z \in \C : Im(z)>0}$ and $\bar y$ denotes component-wise complex conjugation.  Let $q = e^{\pi i \tau}$. Then, $ \Th_\L(q) = \sum_{z \in \L} q^{z\cdot\bar{z}}$.

The one dimensional theta series (or Jacobi's theta series) and its shadow are given by
\[
\begin{split}
\th_3(q)   & =\sum_{n\in\Z} q^{n^2}, \\
 \th_2(q)  & =\sum_{n\in\Z} q^{(n+1/2)^2}=\sum_{n\in\Z+\frac{1}{2}}q^{n^2}.
 \end{split}
 \]
Let $\ell\equiv 3 \mod 4$ and $d$ be a positive number such that $\ell=4d-1$. Then, $-\ell\equiv 1 \mod 4$.
This implies that the ring of integers is $\O_K=\Z[\om_\ell]$, where $\om_\ell=\frac{-1+\sqrt{-\ell}}{2}$ and
$\om_\ell^2 + \om_\ell+d=0$. The principal norm form of $K$ is given by
\begin{equation}\label{eq1}
Q_d(x,y) = |x-y\om_\ell|^2 = x^2+xy+dy^2.
\end{equation}
%
%
%

The structure of $\O_K/p\O_K$ depends on the value of $\ell$. For $p=2$, 
\begin{equation}
 \O_K/2\O_K =
 \begin{cases}
   \F_2\times\F_2 & \text{if $\ell\equiv7$ (mod }$8)$, \\
   \F_4 & \text{if $\ell\equiv3$ (mod }$8)$. \\
 \end{cases}
\end{equation}

For $p$ an odd prime, 
\begin{equation}
 \O_K/p\O_K =
 \begin{cases}
   \F_p\times\F_p & \text{if $(\frac{-\ell}{p})=1$}, \\
   \F_{p^2} & \text{if $(\frac{-\ell}{p})=-1$}, \\
   \F_p+u\F_p \text{ with $u^2=0$} & \text{if $p\mid\ell$}.
 \end{cases}
\end{equation}
where  $\left(\displaystyle\frac{a}{p}\right)$ is the Legendre symbol.  From here forward we assume that  $p\nmid\ell$.

For integers $a$ and $b$ and a prime $p$, let $\La_{a,b}$ denote the coset
$a-b\om_\ell+p\O_K$.  The theta series associated to this coset is
\begin{equation}
\begin{split}
\th_{\La_{a,b}}(q)  &=  \sum_{m,n\in\Z}q^{|a+mp-(b+np)\om_\ell|^2} \\
& =  \sum_{m,n\in\Z}q^{Q_d(mp+a,np+b)}  =  \sum_{m,n\in\Z}q^{p^2Q_d(m+a/p, n+b/p)}.
\end{split}
\end{equation}
For a prime $p$ and an integer $j$, consider the one-dimensional theta series
\begin{equation}\label{eqn:one-dim-series}
\th_{p,j}(q):=\sum_{n\in\Z}q^{(n+j/2p)^2}.
\end{equation}
Note that $\th_{p,j}(q)=\th_{p,k}(q)$ if and only if $j\equiv\pm k \mod 2p$.  The following were proven in \cite{sh_sh}:

\begin{lem}\label{lemma:in-terms-of-one-dim-series}
One can write $\th_{\La_{a,b}}(q)$ in terms of one-dimensional theta series defined above in Eq.~\eqref{eqn:one-dim-series}. In particular,
\begin{equation}\th_{\La_{a,b}}(q)=\th_{p,b}(q^{p^2\ell})\th_{p,2a+b}(q^{p^2})+\th_{p,b+p}(q^{p^2\ell})\th_{p,2a+b+p}(q^{p^2}).
\end{equation}
\end{lem}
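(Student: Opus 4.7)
The plan is to start from the last expression for $\th_{\La_{a,b}}(q)$ given just before the lemma,
\[
\th_{\La_{a,b}}(q) = \sum_{m,n\in\Z} q^{p^2 Q_d(m+a/p,\, n+b/p)},
\]
and manipulate the binary quadratic form so that the double sum factors into two independent one-dimensional sums of the shape \eqref{eqn:one-dim-series}. The key observation is the completion of the square
\[
Q_d(x,y) = x^2 + xy + dy^2 = (x+y/2)^2 + \tfrac{\ell}{4}\, y^2,
\]
which uses only the relation $\ell = 4d-1$. Substituting $x = m+a/p$, $y = n+b/p$ and distributing the factor $p^2$ then gives
\[
p^2 Q_d(m+a/p,\, n+b/p) = \bigl(pm + a + (pn+b)/2\bigr)^2 + \ell\bigl((pn+b)/2\bigr)^2,
\]
after which the two squares would depend on disjoint sets of variables were it not for the coupling term $pn/2$ inside the first square.

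The second step is therefore to decouple $m$ and $n$ by splitting the outer sum according to the parity of $n$. If $n = 2n'$ is even, the $\ell$-factor becomes $\ell(pn'+b/2)^2 = p^2\ell(n'+b/(2p))^2$, which is exactly the summand of $\th_{p,b}(q^{p^2\ell})$, while the first square becomes $(p(m+n')+a+b/2)^2$; reindexing $m \mapsto m - n'$ turns the $m$-sum into $\sum_m q^{(pm+(2a+b)/2)^2} = \th_{p,2a+b}(q^{p^2})$. If instead $n = 2n'+1$ is odd, the same procedure produces an extra shift of $p/2$ in each square, and one reads off $\th_{p,b+p}(q^{p^2\ell})$ and $\th_{p,2a+b+p}(q^{p^2})$. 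Summing the two parity contributions will give the claimed identity.

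I do not expect a substantive obstacle here; the argument reduces to a short computation once one commits to the square completion together with the parity split. The only point requiring care is bookkeeping: I would need to verify that the shifted indices land in the correct residues modulo $2p$ so that the resulting one-dimensional sums really do match the definition $\th_{p,j}(q)=\sum_n q^{(n+j/(2p))^2}$, and that the reindexing $m \mapsto m - n'$ is a bijection on $\Z$ for each fixed $n'$, which it plainly is. No ingredients beyond those used in the construction of $\th_{\La_{a,b}}(q)$ itself are required.
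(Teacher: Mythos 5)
Your proposal is correct and is essentially the argument behind this lemma (the paper itself only cites \cite{sh_sh} for the proof): complete the square via $Q_d(x,y)=(x+y/2)^2+\tfrac{\ell}{4}y^2$ using $\ell=4d-1$, split the sum over $n$ by parity, and reindex $m\mapsto m-n'$ to factor each parity class into the product of the two one-dimensional series. The bookkeeping you flag does check out: the even case yields indices $b$ and $2a+b$ and the odd case yields $b+p$ and $2a+b+p$, exactly as claimed.
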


\begin{lem}\label{lemma-congruences}
For any integers $a,b,m,n$, if the ordered pair $(m,n)$ is congruent modulo $p$ to one of $(a, b), (-a-b, b),
(-a, -b), (a+b, -b)$, then \[\th_{\La_{m,n}}(q)=\th_{\La_{a,b}}(q).\]
\end{lem}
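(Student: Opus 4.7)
The plan is to recognize the four cosets in the statement as the orbit of a single element of $\O_K$ under the group generated by negation and complex conjugation. Since these symmetries preserve the complex norm and fix $p\O_K$, they will preserve the theta series while moving $\La_{a,b}$ bijectively onto each of the other three cosets.

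First, I would translate the four coset labels into concrete elements of $\O_K$. Set $z := a - b\om_\ell$, so that $\La_{a,b} = z + p\O_K$. From $\om_\ell^2 + \om_\ell + d = 0$ one reads off $\om_\ell + \bar\om_\ell = -1$, hence $\bar\om_\ell = -1 - \om_\ell$. A short computation then yields
\[
\bar z = (a+b) - (-b)\om_\ell, \quad -z = (-a) - (-b)\om_\ell, \quad -\bar z = (-a-b) - b\om_\ell,
\]
so that $z, -z, \bar z, -\bar z$ are representatives of $\La_{a,b}$, $\La_{-a,-b}$, $\La_{a+b,-b}$, $\La_{-a-b,b}$ respectively, matching exactly the four cosets appearing in the lemma. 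The case $(m,n) \equiv (a,b) \pmod p$ is trivial since then $\La_{m,n}$ and $\La_{a,b}$ coincide as cosets, so their theta series agree termwise.

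For each of the three nontrivial cases I would apply the corresponding symmetry $T$ (one of $w \mapsto -w$, $w \mapsto \bar w$, $w \mapsto -\bar w$) to the defining sum. Each $T$ is a $\Z$-linear bijection of $\O_K$ satisfying $|T(w)| = |w|$, and each sends $p\O_K$ into itself (for conjugation, because $p \in \Z$ and $\bar{\O_K} = \O_K$). Consequently $T$ restricts to a norm-preserving bijection $\La_{a,b} \to T(z) + p\O_K$. Re-indexing by $w' = T(w)$ in
\[
\th_{\La_{a,b}}(q) = \sum_{w \in \La_{a,b}} q^{|w|^2}
\]
gives $\th_{\La_{a,b}}(q) = \th_{T(z) + p\O_K}(q)$, which by the identification of the previous paragraph is the desired $\th_{\La_{-a,-b}}(q)$, $\th_{\La_{a+b,-b}}(q)$, or $\th_{\La_{-a-b,b}}(q)$ as appropriate.

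There is no substantial obstacle: the only thing to check carefully is the identification of $\bar z$ and $-\bar z$ as elements of the stated cosets, which is an elementary computation using $\bar\om_\ell = -1 - \om_\ell$, together with the observation that negation and conjugation stabilize $p\O_K$.
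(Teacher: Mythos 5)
Your proof is correct and is essentially the paper's argument in slightly different clothing: the paper (via \cite{sh_sh}) exploits the invariance of the norm form $Q_d(x,y)=x^2+xy+dy^2$ under the Klein four-group generated by $(x,y)\mapsto(-x,-y)$ and $(x,y)\mapsto(x+y,-y)$, which are exactly the negation and complex-conjugation symmetries you identify, acting on coset representatives $a-b\om_\ell+p\O_K$. Your verification that these maps preserve $|\cdot|^2$, stabilize $p\O_K$, and permute the four cosets as claimed is complete, so nothing is missing.
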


The Klein 4-group  generated by matrices
\[ \begin{pmatrix}-1&0\\0&-1\end{pmatrix} \text{   and   }\begin{pmatrix}1&1\\0&-1\end{pmatrix}\]
acts on $(Z/pZ)^2$. The orbits form equivalence classes  on $\Z^2$. This equivalence is given by
\[(a,b)\sim(m,n) \text{ if } (m,n) \equiv  (a,b), \, (-a-b, b), \, (-a, -b), \text{ or } (a+b, -b) \mod p.\]
By Lemma \ref{lemma-congruences}, if $(a, b)\sim (m, n)$, then \[\th_{\La_{a,b}}(q)=\th_{\La_{m,n}}(q).\]
Then we have the following result:
\begin{cor}\label{cor-num-theta-fns}
For   any odd prime $p$, the set $\set{\th_{\La_{a,b}}(q) : a, b \in \Z}$ contains at most
$\frac{(p+1)^2}{4}$ elements.
\end{cor}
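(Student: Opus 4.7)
The plan is to reduce the counting of distinct theta functions to the counting of orbits of a finite group action on $(\Z/p\Z)^2$, and then execute that orbit count by Burnside's lemma.

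The key observation from Lemma~\ref{lemma-congruences} is that the map $(a,b) \mapsto \th_{\La_{a,b}}(q)$ factors through the equivalence relation $\sim$ on $(\Z/p\Z)^2$ coming from the Klein 4-group action generated by $-I$ and $M := \bigl(\begin{smallmatrix}1 & 1\\ 0 & -1\end{smallmatrix}\bigr)$. Consequently the cardinality of the set $\{\th_{\La_{a,b}}(q) : a,b \in \Z\}$ is bounded above by the number of orbits of this action on $(\Z/p\Z)^2$, so it suffices to prove this orbit count equals $(p+1)^2/4$.

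First I would write out the four group elements explicitly as $I, -I, M, -M$, verify that this is indeed a Klein 4-group (each non-identity element has order $2$), and check that the four images of $(a,b)$ under these elements are exactly the four representatives listed in the equivalence relation preceding Lemma~\ref{lemma-congruences}. Then I would apply Burnside's lemma, computing the fixed-point set of each group element on $(\Z/p\Z)^2$:
\begin{itemize}
\item $|\mathrm{Fix}(I)| = p^2$;
\item $|\mathrm{Fix}(-I)|$: the condition $2a \equiv 2b \equiv 0 \pmod p$ forces $a \equiv b \equiv 0$ since $p$ is odd, so this is $1$;
\item $|\mathrm{Fix}(M)|$: the condition is $b \equiv 0 \pmod p$ (again using $p$ odd to conclude from $2b \equiv 0$), giving $p$ fixed points;
\item $|\mathrm{Fix}(-M)|$: the condition is $2a + b \equiv 0 \pmod p$, which determines $b$ from $a$ and again gives $p$ fixed points.
\end{itemize}

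Summing and dividing by $|G| = 4$ yields $\tfrac{1}{4}(p^2 + 1 + p + p) = \tfrac{(p+1)^2}{4}$, which is the desired bound. The only real subtlety — and the thing I would be most careful about — is the repeated use of the hypothesis that $p$ is odd to invert $2$ modulo $p$ when computing fixed points of $-I$ and $M$; this is where the argument would fail for $p=2$, so it is worth flagging explicitly. (As an optional sanity check, one can instead partition $(\Z/p\Z)^2$ directly into the singleton orbit $\{(0,0)\}$, the $(p-1)$ orbits of size $2$ coming from points with $b \equiv 0$ or $2a + b \equiv 0$ but $(a,b)\neq(0,0)$, and the remaining $(p-1)^2/4$ orbits of size $4$, verifying the same total.)
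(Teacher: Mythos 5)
Your proof is correct and follows the same route the paper takes implicitly: Lemma~\ref{lemma-congruences} shows the map $(a,b)\mapsto\th_{\La_{a,b}}(q)$ factors through the Klein 4-group orbits on $(\Z/p\Z)^2$, so the count of theta functions is bounded by the number of orbits, which your Burnside computation (and the direct partition into orbits of sizes $1$, $2$, and $4$) correctly evaluates as $(p+1)^2/4$. The paper states the corollary without spelling out the orbit count, so you have simply supplied the detail it omits.
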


The next result determines in what cases we have exactly $\frac{(p+1)^2}{4}$ theta functions. See \cite[Corollary 3.8]{dkl} for details.

\begin{thm}\label{bound_on_d_for_dimension}
For any odd prime $p$ and any  $d>p^2$, the set $\set{\th_{\La_{a,b}}(q) : a,b\in\Z}$ spans a
$\frac{(p+1)^2}{4}$ dimension vector space in $\Z[[q]]$.  \end{thm}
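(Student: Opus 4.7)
The plan is to prove linear independence by comparing leading $q$-exponents: each $\theta_{\Lambda_{a,b}}(q) = \sum_{m,n} q^{Q_d(mp+a,\,np+b)}$ has nonnegative integer coefficients, so if the minimal exponents $N(a,b) := \min_{m,n} Q_d(mp+a,\,np+b)$ are all distinct across equivalence classes, then the corresponding theta functions must be linearly independent over $\Q$. Combined with Corollary~\ref{cor-num-theta-fns}, this upgrades the upper bound $(p+1)^2/4$ to equality.

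First I would pin down a fundamental domain for the Klein 4-group action. One convenient choice is to pick a representative $(a,b)$ with $0 \le b \le (p-1)/2$, and within each strip use $(a,b)\sim(-a-b,b)$ to restrict $a$ to, say, the representatives that also minimize $|2a+b|\bmod p$. A Burnside count (the identity fixes $p^2$ pairs, negation fixes only $(0,0)$, and each of $(a,b)\mapsto(-a-b,b)$ and $(a,b)\mapsto(a+b,-b)$ has exactly $p$ fixed points in $(\Z/p\Z)^2$) yields $(p^2+1+2p)/4 = (p+1)^2/4$ orbits, matching the bound.

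Next I would show that for $d > p^2$ and such a representative, $N(a,b) = Q_d(a,b) = a^2 + ab + db^2$. Using the completed square $Q_d(x,y) = (x + y/2)^2 + (d - 1/4)\, y^2$, any competitor $(mp+a,\,np+b)$ with $n\ne 0$ already contributes at least $(d-1/4)(|b|+p/2)^2$ in the $y$-part once $|n|\ge 1$; for $d > p^2$ this dwarfs $Q_d(a,b)$, which is bounded by a fixed polynomial in $p$. The case $n=0$, $m\ne 0$ reduces to comparing $a^2 + ab$ with $(a\pm p)^2 + (a\pm p)b$, and the choice $|2a+b| \le p$ is exactly what guarantees $(m,n)=(0,0)$ is the minimizer.

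Finally I would verify that the minima $N(a,b) = (a+b/2)^2 + (d - 1/4)b^2$ separate the orbits. If $(a,b)$ and $(a',b')$ are distinct canonical representatives with $N(a,b) = N(a',b')$, then
\[
(a+b/2)^2 - (a'+b'/2)^2 \;=\; (d-1/4)\bigl((b')^2 - b^2\bigr).
\]
The left side is bounded in absolute value by a constant multiple of $p^2$, while the right side, when $b^2 \ne (b')^2$, has absolute value at least $d - 1/4 > p^2 - 1/4$; for $d > p^2$ this forces $b^2 = (b')^2$. Since our domain has $b,b'\ge 0$, this gives $b=b'$, and then $(a+b/2)^2 = (a'+b/2)^2$ forces either $a=a'$ or $a' = -a-b$, both of which put $(a,b)$ and $(a',b')$ in the same orbit. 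The main obstacle is this last distinctness estimate: making the bookkeeping tight enough so that the threshold is precisely $d > p^2$ rather than something weaker, and handling the boundary case $b = (p-1)/2$ with care so that the chosen representative really does minimize $Q_d$ over its full orbit.
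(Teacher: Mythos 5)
The paper does not actually prove this theorem; it defers entirely to \cite[Corollary 3.8]{dkl}, so there is no in-paper argument to compare against. Your strategy --- separating the $(p+1)^2/4$ orbit classes by the minimal exponent $N(a,b)=\min_{m,n}Q_d(mp+a,np+b)$, using the completed square $Q_d(x,y)=(x+y/2)^2+(d-\tfrac14)y^2$ so that for large $d$ the $y$-part dominates --- is the natural one and it does work. Your Burnside count is right, the reduction of linear independence to distinct minimal exponents is valid (the leading coefficients are positive), and the final separation step (forcing $b=b'$ and then $2a+b=\pm(2a'+b')$, which places the two representatives in the same orbit) is correct.

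Two of your intermediate bounds are wrong as stated, though both are repairable without changing the structure. First, for $0\le b\le(p-1)/2$ and $n\ne0$ the minimum of $|np+b|$ is $p-b$ (attained at $n=-1$), not $|b|+p/2$; for $b>p/4$ your claimed lower bound $(|b|+p/2)^2$ exceeds the true value $(p-b)^2$. Second, $Q_d(a,b)=(a+b/2)^2+(d-\tfrac14)b^2$ is \emph{not} ``bounded by a fixed polynomial in $p$'' --- it grows linearly in $d$ whenever $b\ne0$ --- so you cannot argue that the competitor's $y$-part ``dwarfs $Q_d(a,b)$'' by comparing against a $d$-free quantity. The correct comparison is between the two $(d-\tfrac14)$-weighted terms: $(d-\tfrac14)\left((p-b)^2-b^2\right)=(d-\tfrac14)p(p-2b)\ge(d-\tfrac14)p$, which already exceeds the maximal possible $x$-part $(2a+b)^2/4\le p^2/4$ once $d>\tfrac{p+1}{4}$, so $(m,n)=(0,0)$ is indeed the minimizer for the canonical representative. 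With that fix everything goes through; note also that your separation inequality only requires $p^2/4<d-\tfrac14$, i.e.\ $d>(p^2+1)/4$, so your argument actually proves a sharper threshold than the stated $d>p^2$, consistent with the paper's remark that its bound is not sharp.
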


\begin{rem}
i) As a corollary of the Theorem we have that Lemma \ref{lemma-congruences} is an ``if and only if'' statement
for large enough $d$. 

ii) The bound for $d$ given in Theorem \ref{bound_on_d_for_dimension} is not sharp.  For instance, using an implementation in the Sage computer algebra system, one finds that for $d=2$, there are $\frac{(p+1)^2}{4}$ equivalence classes for all primes $p\leq19$. 
\end{rem}


\subsection{Theta functions of codes over $\O_K / p\O_K$}\label{sec:theta_functions_of_codes}
Let $p\nmid\ell$ and \[\R := \O_K / p\O_K =\left\{a+b\om : a, b\in\F_p,\, \om^2+\om+d=0\right\},\] as above.  We have the map
\[\rho_{\ell, p}:\O_K \rightarrow \O_K/ p\O_k =: \R\]
A linear code $C$ of length $n$ over $\R$ is an $\R$-submodule of $\R^n$. The dual is defined as
\[C^\bot=\set{u\in \R^n : u\cdot \bar{v}=0 \text{ for all } v \in C},\]  where $\bar{v}$ denotes (component-wise) complex conjugation.
If $C=C^\bot$ then $C$ is called self-dual.  We define
\[\L_{\ell}(C):= \left\{ u=(u_1,\dots,u_n) \in \O_K^n : (\rho_{\ell, p}(u_1),\dots,\rho_{\ell,p}(u_n)) \in C \right\}.\]
In other words, $\L_{\ell}(C)$ consists of all vectors in $\O_K^n$ in the inverse image of $C$, taken component-wise by $\rho_{\ell,p}$.
This method of lattice construction is known as \emph{Construction A}.

For notation, let $r_{a+pb+1}=a-b\om$, so $\R=\set{r_1,\dots,r_{p^2}}.$  For a codeword $u=(u_1,\dots,u_n)\in\mathcal{R}^n$ and $r_i \in\mathcal{R}$, we define the counting function 
\[ n_i(u) :=\#\set{j : u_j=r_i}.\]
The complete weight enumerator of the $\R$ code $C$ is the polynomial
\begin{equation}
cwe_C(z_1, z_2, \dots, z_{p^2})=\sum_{u\in C}z_1^{n_1(u)}z_2^{n_2(u)}\dots z_{p^2}^{n_{p^2}(u)}.
\end{equation}
We can use this polynomial to find the theta function of the lattice $\La_\ell(C)$. For a proof of the
following result see \cite{SS}.
\begin{lem}\label{lem1} Let $C$ be a code defined over $\R$ and $cwe_C$ its complete weight enumerator as above. Then,
\[\th_{\L_\ell(\mathcal{C})}(q) = cwe_\mathcal{C}(\th_{\La_{0,0}}(q),\th_{\La_{1,0}}(q),\th_{\La_{2,0}}(q),\dots,\th_{\La_{p-1,0}}(q),\th_{\La_{0,1}}(q),\dots,\th_{\La_{p-1,p-1}}(q))\]
\end{lem}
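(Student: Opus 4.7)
The plan is to unpack the definition of the theta series, split the sum over the lattice $\Lambda_\ell(C)$ according to which codeword each lattice vector projects to, and then observe that the inner product splits as a sum over coordinates so that the resulting sum factors as a product of one-coordinate theta series, each of which is a coset theta $\theta_{\Lambda_{a,b}}(q)$.

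More concretely, I would start with
\[
\Theta_{\Lambda_\ell(C)}(q)=\sum_{z\in\Lambda_\ell(C)}q^{z\cdot\bar z},
\]
and partition $\Lambda_\ell(C)$ via the surjection $\rho_{\ell,p}^{\,n}:\mathcal{O}_K^n\to\mathcal{R}^n$. Because $\Lambda_\ell(C)=(\rho_{\ell,p}^{\,n})^{-1}(C)$, we get a disjoint-union decomposition $\Lambda_\ell(C)=\bigsqcup_{c\in C}(\rho_{\ell,p}^{\,n})^{-1}(c)$. For a fixed codeword $c=(c_1,\dots,c_n)$ with $c_j=r_{a_j+pb_j+1}=a_j-b_j\omega$, the preimage is simply the Cartesian product $\prod_{j=1}^n(a_j-b_j\omega+p\mathcal{O}_K) = \prod_{j=1}^n \Lambda_{a_j,b_j}$ in the notation introduced earlier.

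Next, using the additivity $z\cdot\bar z=\sum_{j=1}^n z_j\bar z_j$, the contribution of a single codeword factors:
\[
\sum_{z\in(\rho_{\ell,p}^{\,n})^{-1}(c)}q^{z\cdot\bar z}
=\prod_{j=1}^n\sum_{z_j\in\Lambda_{a_j,b_j}}q^{z_j\bar z_j}
=\prod_{j=1}^n\theta_{\Lambda_{a_j,b_j}}(q).
\]
Grouping the $n$ factors according to which element of $\mathcal{R}$ they come from, i.e., collecting the $n_i(c)$ indices $j$ with $c_j=r_i$, this product rewrites as $\prod_{i=1}^{p^2}\theta_{\Lambda_{(r_i)}}(q)^{n_i(c)}$, where $\theta_{\Lambda_{(r_i)}}(q)$ denotes $\theta_{\Lambda_{a,b}}(q)$ for the unique pair $(a,b)\in\{0,\dots,p-1\}^2$ with $r_i=a-b\omega$.

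Summing over $c\in C$ and comparing with the definition of $cwe_C$ yields the claim. There is no real obstacle; the only point requiring a little care is bookkeeping the bijection $r_i\leftrightarrow(a,b)$ so that the variable $z_i$ in $cwe_C$ is matched with the correct coset theta $\theta_{\Lambda_{a,b}}(q)$ in the order specified by the lemma statement. Convergence and the interchange of summation are immediate from the absolute convergence of $\Theta_{\Lambda_\ell(C)}(q)$ as a power series in $q$ (equivalently, for $\tau\in H$).
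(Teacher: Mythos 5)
Your argument is correct and is the standard proof of this lemma: the paper itself does not prove it but defers to the reference [SS], and the computation there is exactly this one --- partition $\Lambda_\ell(C)$ into preimages of codewords, note each preimage is a product of cosets $\Lambda_{a_j,b_j}$, use the additivity of $z\cdot\bar z$ to factor each contribution into a product of coset theta series, and collect factors according to the counting functions $n_i$. Your bookkeeping of the correspondence $r_{a+pb+1}\leftrightarrow\Lambda_{a,b}$ also matches the ordering of arguments in the statement, so there is nothing to add.
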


\begin{exmp}
For $p=2$, we have $$\th_{\La_{\ell}(C)}(q)=cwe_C(\th_{\La_{0,0}}(q),\th_{\La_{1,0}}(q),\th_{\La_{0,1}}(q),\th_{\La_{1,1}}(q)).$$  Since $\th_{\La_{0,1}}(q)=\th_{\La_{1,1}}(q)$ (by Lemma \ref{lemma-congruences}), we can define the symmetric weight enumerator $swe_C$ by $$swe_C(X,Y,Z)=cwe_C(X,Y,Z,Z)$$ to get $$\th_{\La_\ell(\mathcal{C})}(q)=swe_\mathcal{C}(\th_{\La_{0,0}}(q),\th_{\La_{1,0}}(q),\th_{\La_{0,1}}(q)).$$  These three theta functions are referred to as $A_d(q), C_d(q),$ and $G_d(q)$ in \cite{ch} and \cite{SV}.\end{exmp}

More generally, for odd $p$, the complete weight enumerator takes $p^2$ arguments corresponding to the $p^2$ lattices $\La_{a,b}(q)$ and their theta functions.  By Theorem \ref{bound_on_d_for_dimension}, for $\ell$ large enough, there are only $\frac{(p+1)^2}{4}$ different theta functions among these $p^2$ lattices.  As above with $p=2$, we define the symmetric weight enumerator of a code in terms of the complete weight enumerator, using the same variable for lattices that have the same theta series.  

\begin{exmp}\label{ex:p=3-swe}
For the case where $p=3$, from \cite[Remark 2.2]{SS}, we have four theta functions corresponding to the lattices
$\La_{a,b}$, namely $\th_{\La_{0,0}}(q)$, $\th_{\La_{1,0}}(q)$, $\th_{\La_{0,1}}(q)$, $\th_{\La_{1,1}}(q).$
We define the symmetric weight enumerator to be $$swe_\mathcal{C}(X,Y,Z,W)=cwe_\mathcal{C}(X,Y,Y,Z,W,Z,Z,Z,W).$$ One then has 
\[
\begin{split}
\th_{\La_\ell(C)} (q) \,  =  \, & cwe_\mathcal{C}(\th_{\La_{0,0}}(q),\th_{\La_{1,0}}(q),\th_{\La_{2,0}}(q),\th_{\La_{0,1}}(q),\dots,\th_{\La_{2,2}}(q)), \\
 \, =\,  & swe_\mathcal{C}(\th_{\La_{0,0}}(q),\th_{\La_{1,0}}(q),\th_{\La_{0,1}}(q),\th_{\La_{1,1}}(q)).
\end{split}
\]
\end{exmp}



\section{Non-equivalent codes with the same theta function}\label{sec:non-equiv-codes-same-theta-fn}
For a fixed prime $p$, let $C$ be a linear code over $\R=\F_{p^2}$ or $\F_p\times\F_p$ of length $n$ and
dimension $k$. An admissible level $\ell$ is an integer $\ell$ such that $\O_K/p\O_K$ is isomorphic to
$\R$.  For an admissible $\ell$, let $\L_{\ell} (C) $ be the corresponding lattice as in the previous section.
Then, the \textbf{level $\ell$ theta function} $\Th_{\L_{\ell} (C) } ( q) $ of the lattice $\L_{\ell} (C)$
is determined by the complete weight enumerator $cwe_C $ of $C$, evaluated on the  theta functions defined on
cosets of $\O_K/p\O_K$.

Two natural questions arise.  First, how do these theta functions vary for different levels $\ell$ and the same code $C$?  And second, how do they vary for different codes $C$ and the same level $\ell$?

In \cite[Theorem 11]{sh_sh}, we give a satisfactory answer to the first question, which we reproduce here.

\begin{thm}\cite[Theorem 11]{sh_sh}.\label{thm1}
Let $C$ be a code defined over $\R$.  For all admissible $\ell, \ell'$ with $\ell<\ell'$ the following holds
\[\th_{\La_\ell(C)}(q)=\th_{\La_{\ell'}(C)}(q)+\O(q^{\frac {\ell+1} 4}).\]
\end{thm}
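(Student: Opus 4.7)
The strategy is to reduce the problem to a statement about the coset theta functions and then estimate the $\ell$-dependent part of each such function. By Lemma~\ref{lem1},
\[\th_{\La_\ell(C)}(q)=cwe_C\bigl(\th_{\La_{0,0}}^{(\ell)}(q),\ldots,\th_{\La_{p-1,p-1}}^{(\ell)}(q)\bigr),\]
where the superscript makes the level explicit. Both admissible levels $\ell,\ell'$ yield the same ring~$\R$ and hence the \emph{same} polynomial $cwe_C$. Since congruences modulo $q^N$ are preserved by sums and products of formal power series (via $fh-gk=(f-g)h+g(h-k)$), substitution into $cwe_C$ preserves any mod-$q^N$ congruence between its arguments. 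It therefore suffices to prove
\[\th_{\La_{a,b}}^{(\ell)}(q)\equiv\th_{\La_{a,b}}^{(\ell')}(q)\pmod{q^{(\ell+1)/4}}\]
for every pair $(a,b)$ with $0\le a,b<p$.

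Set $d=(\ell+1)/4$ and split $\th_{\La_{a,b}}^{(\ell)}(q)=\sum_{m,n\in\Z}q^{Q_d(mp+a,\,np+b)}$ according to whether $np+b=0$ or not. The summands with $np+b=0$ (which force $b=0$ and $n=0$) give $\sum_{m\in\Z}q^{(mp+a)^2}$, a series independent of~$\ell$ and hence identical to the corresponding piece of $\th_{\La_{a,b}}^{(\ell')}(q)$. For the remaining summands, completing the square
\[Q_d(x,y)=\left(x+\tfrac{y}{2}\right)^2+\left(d-\tfrac{1}{4}\right)y^2\]
gives $Q_d(mp+a,np+b)\ge d-\tfrac14$ whenever $np+b\ne0$; because $Q_d$ takes positive integer values on $\Z^2\setminus\{(0,0)\}$ and $d\in\Z$, this upgrades to $Q_d(mp+a,np+b)\ge d$.

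Consequently every term of $\th_{\La_{a,b}}^{(\ell)}(q)$ with $np+b\ne 0$ is $\O(q^{d})$, and the analogous estimate at level $\ell'$ is at least as strong because $d'=(\ell'+1)/4\ge d$. Subtracting, the $np+b=0$ contributions cancel exactly while the two tails are each $\O(q^{(\ell+1)/4})$, which yields the required coset-level congruence and, via the reduction in the first paragraph, the theorem. I do not foresee a genuine obstacle: the only points needing care are the substitution principle (a short observation about formal power series) and the integrality step turning $Q_d\ge d-\tfrac14$ into $Q_d\ge d$, without which the error exponent would come out a quarter too small.
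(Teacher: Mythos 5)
Your argument is correct and complete: the exact cancellation of the $y=np+b=0$ terms (which are independent of $d$), the bound $Q_d(x,y)=(x+y/2)^2+(d-\tfrac{1}{4})y^2\ge d-\tfrac{1}{4}$ upgraded to $Q_d\ge d=(\ell+1)/4$ by integrality for $y\ne0$, and the observation that substituting power series congruent mod $q^N$ into the fixed polynomial $cwe_C$ preserves the congruence together give exactly the stated estimate. The paper states this theorem without proof, citing \cite{sh_sh}; your proof is essentially the argument given there, reducing via Lemma~\ref{lem1} to a coset-by-coset comparison of the theta series $\th_{\La_{a,b}}$ and bounding the $\ell$-dependent part of each.
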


The rest of this paper is concerned with the second question.  In particular, we are interested in understanding when non-equivalent codes\footnote{For our context here, by ``non-equivalent'' codes, we mean codes having different symmetric weight enumerator polynomials.}  have the same theta function.  That is, we are looking for codes $C_1$ and $C_2$ defined over $\R$, along with some level $\ell$, such that $swe_{C_1}\neq swe_{C_2}$ and $\th_{\L_\ell(C_1)}(q)=\th_{\L_\ell(C_2)}(q)$.

 This is primarily motivated by an example in \cite{ch} of two codes of length $n=3$ defined over $\R=\mathbb{F}_2\times\mathbb{F}_2$ which have the same theta function for $\ell=7$ but different theta functions for larger values of $\ell$.  For convenience, we reproduce the results here.

\begin{exmp}(From \cite[Section 6, Example (4)]{ch}.)\label{ex:chua-length3-codes}For $p=2$, let $\mathcal{R}=\{0,1,\omega,1+\omega\}$.  Consider the codes $C_1$ and $C_2$ of length $n=3$ defined by 
\begin{center}$\begin{array}{rcl}
C_1&=&\omega\langle(0,1,1)\rangle+(\omega+1)\langle(0,1,1)\rangle^\perp, \\ 
C_2&=&\omega\langle(0,0,1)\rangle+(\omega+1)\langle(0,0,1)\rangle^\perp.\end{array}$\end{center}  If $\ell\equiv7 \text{ (mod 8)}$, then $\mathcal{R}=\mathbb{F}_2\times\mathbb{F}_2$, so $\omega^2+\omega=0.$  The codes, along with their weight enumerator polynomials and theta series for various levels $\ell$ are given in Table \ref{tab:chua-example}.  Note that since $p=2$, for any code $C$, $swe_C(X,Y,Z)=cwe_C(X,Y,Z,Z)$.

In particular, these two codes, which have different weight enumerator polynomials, have the same theta series for $\ell=7$ and different theta series for $\ell>7$.\end{exmp}  


In this section, we will first work with $p=2$ and then consider $p=3$ and $p=5$ later.

\subsection{The case of $p=2$}\label{subsec:p=2}
Fix $p=2$ and consider a code $C$ of length $n$ over $\R$, for $\R=\mathbb{F}_2\times\mathbb{F}_2$ or $\mathbb{F}_4$.  The theta function of $C$, $\th_{\L_\ell(C)}(q)$, is equal to its symmetric weight enumerator polynomial, which is a homogeneous polynomial in 3 variables of degree $n$, evaluated at certain theta functions as in Lemma~\ref{lem1}.  To address the question of whether two codes can have the same theta function, we first consider the question of whether two homogeneous degree $n$ polynomials can have the same theta function.

Of course, not every homogeneous degree $n$ polynomial is the weight enumerator polynomial of some code.  However, if the degree $n$ polynomials all give rise to different theta functions for a particular level $\ell$, then we can conclude that the codes of length $n$ give rise to different theta functions as well.  In \cite{SV}, the authors used this idea to prove the following result.

\begin{thm}[\cite{SV}, Thm.~2]\label{thm:p=2-codes-theorem}
Let $p=2$ and $C$ be a code of size $n$ defined over $\R$ and $\Th_{\La_\ell} (C)$ be its corresponding theta
function for level $\ell$. Then the following hold:
\begin{description}
\item [i)]
For $\ell < \frac {2(n+1)(n+2)}{n} -1$ there is a $\delta$-dimensional family of symmetrized weight
enumerator polynomials corresponding to $\Th_{\La_\ell}(C)$, where\\ $\delta \geq \frac
{(n+1)(n+2)}{2}-\frac{n(\ell+1)}{4} - 1$.

\item [ii)]
For $\ell \geq \frac {2(n+1)(n+2)}{n} -1$ and $n < \frac{\ell+1}{4}$ there is a unique symmetrized weight
enumerator polynomial which corresponds to $\Th_{\La_\ell}(C)$.
\end{description}
\end{thm}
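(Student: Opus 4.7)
The plan is to recast the theorem in terms of a single linear evaluation map. Write $V_n$ for the space of homogeneous degree-$n$ polynomials in three variables $X, Y, Z$, so that $\dim V_n = \frac{(n+1)(n+2)}{2}$, and define $\psi_\ell: V_n \to \C[[q]]$ by $\psi_\ell(P) = P(\th_{\La_{0,0}}(q), \th_{\La_{1,0}}(q), \th_{\La_{0,1}}(q))$. By Lemma~\ref{lem1} specialized to $p = 2$, we have $\psi_\ell(swe_C) = \Th_{\La_\ell(C)}(q)$ for every code $C$. Thus the set of symmetric weight enumerator polynomials producing a given $\Th_{\La_\ell(C)}(q)$ is an affine coset of $\ker(\psi_\ell)$ in $V_n$, of dimension $\dim\ker(\psi_\ell)$.

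For part (ii), I would show that $\psi_\ell$ is injective when $n < \frac{\ell+1}{4}$. Set $d = \frac{\ell+1}{4}$. The series $\th_{\La_{0,0}}$, $\th_{\La_{1,0}}$, $\th_{\La_{0,1}}$ have leading $q$-terms at exponents $0$, $1$, and $d$ respectively, each with a positive integer coefficient (counting shortest vectors in the relevant coset). Consequently, the monomial $X^aY^bZ^c$ with $a+b+c=n$ evaluates to a power series whose leading term is $q^{b+dc}$ with nonzero coefficient. When $d > n$, the map $(a,b,c) \mapsto b + dc$ is injective on triples with $a+b+c=n$: a collision $b_1+dc_1 = b_2+dc_2$ with $c_1 \neq c_2$ would force $|b_1 - b_2| = d|c_1 - c_2| \geq d > n$, a contradiction. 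Hence the $\frac{(n+1)(n+2)}{2}$ monomial evaluations have pairwise distinct leading exponents and are linearly independent in $\C[[q]]$, so $\psi_\ell$ is injective, forcing uniqueness of $swe_C$. The additional hypothesis $\ell \geq \frac{2(n+1)(n+2)}{n}-1$ is the complement of the part-(i) hypothesis, producing a clean dichotomy between the two parts.

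For part (i), I would deduce the family-of-preimages statement from an upper bound $\dim\psi_\ell(V_n) \leq \frac{n(\ell+1)}{4}+1$. By rank-nullity this yields $\delta = \dim\ker(\psi_\ell) \geq \frac{(n+1)(n+2)}{2} - \frac{n(\ell+1)}{4} - 1$, matching the stated bound. The hypothesis $\ell < \frac{2(n+1)(n+2)}{n}-1$ is exactly $\frac{n(\ell+1)}{4} < \frac{(n+1)(n+2)}{2}$, which is what makes the lower bound on $\delta$ meaningful and the associated family nontrivial.

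The main obstacle is establishing the upper bound $\dim\psi_\ell(V_n) \leq \frac{n(\ell+1)}{4}+1$. My approach would be to view $\psi_\ell(V_n)$ as the degree-$n$ graded piece of the subring $R = \C[\th_{\La_{0,0}}, \th_{\La_{1,0}}, \th_{\La_{0,1}}] \subseteq \C[[q]]$ and to exploit the modular-form structure of the three generators, which are weight-$1$ forms (with nebentype character) on a congruence subgroup of level depending on $\ell$. Their degree-$n$ products lie in a finite-dimensional space of weight-$n$ modular forms, and bounding its dimension by $\frac{n(\ell+1)}{4}+1$, either via a Sturm-type estimate or by computing the Hilbert polynomial of the projective curve in $\mathbb{P}^2$ cut out by the ideal of relations among the three theta series, is the technical heart of the argument; the shape of the bound reflects that this curve has degree of order $d$.
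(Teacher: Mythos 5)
Your treatment of part (ii) is correct and is essentially the method this paper attributes to \cite{SV} (``looking at the first positive power of $q$ in each theta series''): with $d=\frac{\ell+1}{4}$, the series $\th_{\La_{0,0}}$, $\th_{\La_{1,0}}$, $\th_{\La_{0,1}}$ begin at $q^0$, $q^1$, $q^d$ with positive leading coefficients, so the monomial $X^aY^bZ^c$ evaluates to a series of valuation $b+dc$; when $n<d$ these valuations are pairwise distinct over the $\frac{(n+1)(n+2)}{2}$ monomials, distinct valuations force linear independence, and injectivity of $\psi_\ell$ follows. Nothing to add there.

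Part (i) has a genuine gap, and it sits exactly where you placed the ``technical heart.'' Rank--nullity correctly reduces the claim to the upper bound $\dim\psi_\ell(V_n)\le \frac{n(\ell+1)}{4}+1$, but you do not prove that bound, and the paper's own data show it is false in general: the nullity table reports $\dim\ker M_{15,6}=1$, so the rank of the (untruncated) evaluation map for $n=6$, $\ell=15$ is at least $28-1=27$, while $\frac{n(\ell+1)}{4}+1=25$; note that $15<\frac{2\cdot 7\cdot 8}{6}-1$, so this case falls squarely under the hypothesis of part (i). Hence no Sturm-bound or Hilbert-polynomial computation can deliver the inequality you need. What the leading-term method actually yields here is weaker: the valuations $b+dc$ of the monomial images all lie in $\{0,1,\dots,nd\}$, so at most $nd+1$ of them can be pairwise distinct --- but power series sharing a valuation can still be linearly independent, so this bounds the number of distinct leading exponents, not the rank of $\psi_\ell$. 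What does follow from rank--nullity is the truncated statement: the map sending a degree-$n$ polynomial to the first $\frac{n(\ell+1)}{4}+1$ coefficients of its theta series has kernel of dimension at least $\frac{(n+1)(n+2)}{2}-\frac{n(\ell+1)}{4}-1$, i.e., one obtains a family of polynomials whose theta series agree through order $q^{n(\ell+1)/4}$. To recover part (i) as you have read it, you would need an additional argument that agreement to that order forces agreement of the full series, and neither your sketch nor the present paper supplies one; absent that, part (i) should be proved (and interpreted) in the truncated form.
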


Thus, for $\ell$ large enough, non-equivalent codes have different theta functions.  [Insert table, like the table below, showing results of this theorem.]

The method of Theorem~\ref{thm:p=2-codes-theorem} involved looking at the first positive power of $q$ in each theta series.  We can improve on the above theorem in certain cases.  Let $f(x,y,z)$ be a homogeneous polynomial in three variables of degree $n$.  Write $$f(x,y,z)=\sum_{i+j+k=n}c_{ijk}x^iy^jz^k,$$ a polynomial with ${n+2\choose2}=(n+2)(n+1)/2$ coefficients.  For any level $\ell$, one can calculate the theta function associated to each monomial $x^iy^jz^k$, represent that power series as a vector of coefficients (truncated at some degree), create the matrix $M_{\ell,n}$ formed by such vectors, and then compute the dimension of its nullspace.  If $\dim(\text{Nul}(M_{\ell,n}))=0$, then there do not exist two homogeneous polynomials of degree $n$ with the same theta series for level $\ell$, and so there therefore do not exist two codes of length $n$ with the same theta series for level $\ell$.

For example, in the case of $p=2$, $n=4$, there are $15$ monomials.  We compute the coefficients of the theta series associated to each of these monomials for level $\ell=15$ up to the $q^{16}$ term.  The matrix $M_{15,4}$ is given by
\begin{center}$M_{15,4}=\left(
\begin{array}{ccccccccccccccc}
1&0&0&0&0&0&0&0&0&0&0&0&0&0&0\\
0&0&2&0&0&0&0&0&0&0&0&0&0&0&0\\
0&0&0&0&0&4&0&0&0&0&0&0&0&0&0\\
0&0&0&0&0&0&0&0&0&8&0&0&0&0&0\\
8&2&0&0&0&0&0&0&0&0&0&0&0&0&16\\
0&0&12&0&4&0&0&0&0&0&0&0&0&0&0\\
0&2&0&0&0&16&0&0&8&0&0&0&0&0&0\\
0&0&0&0&4&0&0&0&0&16&0&0&0&16&0\\
24&12&0&4&0&0&0&0&8&0&0&0&0&0&0\\
0&0&26&0&16&0&0&8&0&0&0&0&0&16&0\\
0&14&0&8&0&24&0&0&16&0&0&0&16&0&0\\
0&0&0&0&20&0&0&16&0&24&0&0&0&0&0\\
32&24&0&20&0&0&8&0&24&0&0&0&32&0&64\\
0&0&28&0&20&0&0&24&0&0&0&16&0&16&0\\
0&36&0&40&0&32&24&0&16&0&0&0&16&0&0\\
0&0&2&0&36&0&0&48&0&48&0&48&0&48&0\\
40&18&0&40&0&8&40&0&32&0&16&0&32&0&0\\
\end{array}
\right)$\end{center}
This $17\times15$ matrix has rank 15 and therefore nullspace dimension 0.  We conclude that there cannot be two non-equivalent codes of length $n=4$ and the same theta series for level $\ell=15$.

In the table below, we calculate the nullity of $M_{\ell,n}$ for $\ell\in\{3,7,11,\dots,35\}$ and $n\in\{1,2,\dots,12\}$.  In particular, note that each ``$0$'' in the table describes a situation in which we cannot have two non-equivalent codes with the same theta series.

\begin{center}
$\begin{array}{c|ccccccccc} & \ell=3 &   \ell=7 & \ell=11 & \ell=15 & \ell=19   & \ell=23  &  \ell=27   &  \ell=31  &   \ell=35 \\ \hline
   n=1 &  1 & 0 & 0 &   0 &   0 &   0 &   0 &    0 &    0 \\
   n=2 &  3 &  1 &   0 &   0 &   0 &   0 &   0 &    0 &    0 \\
   n=3 &  6 &  3 &  1 &   0 &   0 &   0 &   0 &    0 &    0 \\
   n=4 &  10 &  6 &  3 &   0 &   0 &   0 &   0 &    0 &    0 \\
   n=5 &  15 &  10 &  6 &   0 &  1 &   0 &   0 &    0 &    0 \\
   n=6 &  21 &  15 &  10 &  1 &  3 &  1 &   0 &    0 &    0 \\
   n=7 &  28 &  21 &  15 &  3 &  6 &  3 &   0 &    0 &    0 \\
   n=8 &  36 &  28 &  21 &  6 &  10 &  6 &   0 &   1 &    0 \\
   n=9 &  45 &  36 &  28 &  10 &  15 &  10 &   1 &  3 &    0 \\
   n=10 & 55 &  45 &  36 &  15 &  21 &  15 &   3 &  6 &    0 \\
   n=11 & 66 &  55 &  45 &  21 &  28 &  21 &   6 &  10 &    0\\
   n=12 & 78 &  66 &  55 &  28 &  36 &  28 &   10 &  15 &   1 \end{array}$
\end{center}

Note that if there is a dependence relation among polynomials with a certain value of $\ell$ and $n$, then there are dependence relations for the same level $\ell$ and all degrees $N\geq n$ obtained by multiplying the original relation by any polynomial of degree $(N-n)$.  Interestingly, this is not true for increasing values of $\ell$.

\subsubsection{From polynomials to codes}\label{subsec:p=2-polys-to-codes}
In the table above, we consider polynomials of degree $n$ and theta functions of level $\ell$.  In practice, when dealing with codes, we are only interested in polynomials that arise as weight enumerator polynomials.  This greatly limits our search.

Motivated by Example~\ref{ex:chua-length3-codes}, for $p=2$, we fixed values of $\ell$ and $n$ and implemented algorithms using the Sage computer algebra system to look at codes of the form $$C(a_1,a_2,\mathbf{v}) := a_1\langle\mathbf{v}\rangle + a_2\langle\mathbf{v}\rangle^\perp$$ for $a_1,a_2\in\mathcal{R}$ and $\mathbf{v}\in\mathcal{R}^n$.  Note that if $\R$ is a field, then this is rather straightforward.  If $\R$ is not a field, then one must take extra care to compute the dual space because of the presence of zero divisors.

For small $n$ and for any $\ell$ we can compute all of the codes of this form, their weight enumerator polynomials, and their corresponding theta functions.  The number of different weight enumerator polynomials, followed by the number of theta functions for each combination of $n$ and $\ell$, is given in the table below.

\begin{center}
$\begin{array}{|c||c|c|c|c|}
\hline
{\bf p=2}
& \ell=3 & \ell=7 & \ell=11 & \ell=15  \\ \hline \hline
n=2 & 2,2 & 5,4 & 2,2 & 5,5 \\ \hline
n=3 & 3,3 & 11,8 & 3,3 & 11,11 \\ \hline
n=4 & 5,4 & 14,13 & 5,5 & 14,14 \\ \hline
n=5 & 6,5 & 18, 17 & 6,6 & 18,18 \\ \hline
\end{array}$ \\ 
\end{center}

One can then see that in the cases of $\ell=7$ and $n\geq 2$, as well as $\ell=3$ and $n\geq4$, we have non-equivalent codes with the same theta function.  The remainder of this subsection contains the specific examples.

\subsubsection{Specific examples of pairs of codes with different theta series for $p=2$}
In the following example, we give a pair of non-equivalent codes with $n=2$ and the same theta function for $\ell=7$.

\begin{exmp}\label{ex:length2codes}
Let $p=2$ and $\ell\equiv7\text{ (mod }8)$. 
Consider the following codes of length $n=2$.
$$C_1=C(\omega, \omega+1, (1,1)), \, C_2=C(\omega,\omega+1,(0,1))$$

The codes, along with their weight enumerator polynomials and theta series for various levels $\ell$ are given in Table \ref{tab:example_p=2,n=2,ell=7}.  As in the motivating example given earlier, these codes, which have different weight enumerator polynomials, have the same theta series for $\ell=7$ and different theta series for $\ell>7$.\end{exmp}

One can see that these codes have a very similar form to the motivating example in Example~\ref{ex:chua-length3-codes}.  In fact, they lead to a family of examples as follows.
\begin{thm}\label{thm:family-of-pairs-of-codes}
For $p=2$, $\ell\equiv 7\text{ (mod }8)$, and any $n\geq 2$, consider the length $n$ codes 
$$C_{n,1} = C(\omega,\omega+1,(0,\dots,0,1,1))$$
$$C_{n,2} = C(\omega,\omega+1,(0,\dots,0,0,1))$$
These codes, which have different symmetric weight enumerator polynomials, have the same theta series for $\ell=7$ and different theta series for $\ell>7$.
\end{thm}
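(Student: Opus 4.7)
The plan is to describe $C_{n,1}$ and $C_{n,2}$ as explicit subsets of $\R^n$, write out their symmetric weight enumerators in closed form, and reduce the theta comparison to the special case $n=3$ of Example~\ref{ex:chua-length3-codes}.

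First I would exploit the idempotent structure of $\R = \F_2 \times \F_2$: $\omega^2 = \omega$, $(1+\omega)^2 = 1+\omega$, and $\omega(1+\omega) = 0$, so $\omega\R = \{0,\omega\}$ and $(1+\omega)\R = \{0,1+\omega\}$. A direct computation of the $\R$-spans and $\R$-duals (the zero divisors matter) yields
\[ C_{n,2} = \{0, 1+\omega\}^{n-1} \times \{0, \omega\}, \]
\[ C_{n,1} = \bigl\{u \in \R^n : u_1, \ldots, u_{n-2} \in \{0, 1+\omega\},\ u_{n-1} = u_n \in \R\bigr\}, \]
both of cardinality $2^n$. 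Tallying contributions coordinate by coordinate gives the closed forms
\[ swe_{C_{n,2}}(X, Y, Z) = (X+Z)^n, \qquad swe_{C_{n,1}}(X, Y, Z) = (X+Z)^{n-2}(X^2 + Y^2 + 2Z^2). \]
Since $swe_{C_{n,2}}$ has no $Y$-dependence while $swe_{C_{n,1}}$ contains a positive $Y^2$ term, the codes are non-equivalent, and their difference factors as
\[ swe_{C_{n,1}} - swe_{C_{n,2}} = (X+Z)^{n-2}\bigl(Y^2 + Z^2 - 2XZ\bigr). \]

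For the equality of theta series at $\ell = 7$ (so $d = 2$), observe that the $n = 3$ case of our pair is exactly Example~\ref{ex:chua-length3-codes}; substituting $(X,Y,Z) = (A_2,C_2,G_2)$ into the difference formula and setting the result to zero yields the identity $(A_2 + G_2)(C_2^2 + G_2^2 - 2A_2 G_2) = 0$. Since $A_2 + G_2$ has constant term $1$ and is therefore a unit in $\Z[[q]]$, we obtain the formal power series identity $C_2^2 + G_2^2 - 2 A_2 G_2 = 0$. Multiplying by $(A_2 + G_2)^{n-2}$ gives $\th_{\La_7(C_{n,1})}(q) = \th_{\La_7(C_{n,2})}(q)$ for every $n \geq 2$.

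For the inequality at higher admissible levels $\ell = 4d - 1$ with $d \geq 4$, I would expand to low order: $C_d(q) = 2q + O(q^4)$, $A_d(q) = 1 + O(q^4)$, and $G_d(q) = 2q^d + O(q^{d+2})$. Only $C_d^2$ contributes to the coefficient of $q^2$ in $C_d^2 + G_d^2 - 2 A_d G_d$, which is therefore $4 \neq 0$. Multiplying by the unit $(A_d + G_d)^{n-2}$ preserves non-vanishing, so $\th_{\La_\ell(C_{n,1})} \neq \th_{\La_\ell(C_{n,2})}$. The main obstacle is the careful $\R$-module bookkeeping required to pin down $C_{n,1}$ and $C_{n,2}$ as the explicit products above; once those descriptions are in hand, the factorization of the SWE difference and the single coefficient comparison at $q^2$ render the rest essentially immediate.
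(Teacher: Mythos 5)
Your proposal is correct and follows essentially the same route as the paper: both reduce to the factorizations $swe_{C_{n,1}}=(X+Z)^{n-2}(X^2+Y^2+2Z^2)$ and $swe_{C_{n,2}}=(X+Z)^n$ and then to the level-$7$ identity between the theta series of $(X^2+Y^2+2Z^2)$ and $(X+Z)^2$, after which one multiplies by the series of $(X+Z)^{n-2}$ (the paper invokes the $n=2$ base case of Example~\ref{ex:length2codes} directly, whereas you recover the same quadratic identity from the $n=3$ case by cancelling the unit $A_2+G_2$). The one genuine refinement is your low-order expansion showing that the coefficient of $q^2$ in $C_d^2+G_d^2-2A_dG_d$ equals $4$ for every admissible $d\geq 4$, which makes the ``different theta series for all $\ell>7$'' half of the claim self-contained rather than resting on the truncated series displayed in the tables.
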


\begin{proof}For $n>2$ and $i\in\{1,2\},$ one sees that $C_{n,i}$ is the concatenation of the length 1 code $\{(0),(\omega+1)\}$ and the length $(n-1)$ code $C_{n-1,i}$.  The symmetric weight enumerator of this length 1 code is $(X+Z)$.  Thus, for $n\geq2$,  $$swe_{C_{n,1}}=(X^2+Y^2+2Z^2)(X+Z)^{n-2} \text{ and } swe_{C_{n,2}}=(X+Z)^n.$$

To obtain the theta series for $C_{n,1}$ and $C_{n,2}$, one can take the theta series for $(X^2+Y^2+2Z^2)$ and $(X+Z)^2$ and multiply each by the theta series for $(X+Z)^{n-2}$.  Since the theta series for the polynomials $(X^2+Y^2+2Z^2)$ and $(X+Z)^2$ are equal for $\ell=7$, the theta series for these codes $C_{n,1}$ and $C_{n,2}$ are also equal for any $n\geq2$.  Also, since the theta series for these polynomials are not equal for $\ell>7$, the theta series for $C_{n,1}$ and $C_{n,2}$ are not equal for $\ell>7$.\end{proof}
\begin{rem}There are three pairs of non-equivalent codes with the same theta function in the case of $n=3$ and $\ell=7$.  The pair of codes in Example~\ref{ex:chua-length3-codes} is one such pair, given by the above theorem with $n=3$.  A second pair of codes, given in Example~\ref{ex:concat_example_p=2,n=3,ell=7}, is also seen to be an extension of Example~\ref{ex:length2codes}.  A third pair of codes, given in Example~\ref{ex:new-length3-codes}, is of a different form.
\end{rem}


\begin{exmp}\label{ex:concat_example_p=2,n=3,ell=7}
Let $p=2$ and $\ell\equiv 7\text{ (mod }8)$.  Consider the following codes of length $n=3$.
$$C_1=C(\omega,1,(0,1,1)),\, C_2=C(1,\omega,(0,0,1))$$

The codes, along with their weight enumerator polynomials and theta series for various levels $\ell$ are given in Table \ref{tab:concat_example_p=2,n=3,ell=7}.  These codes, which have different weight enumerator polynomials, have the same theta series for $\ell=7$ and different theta series for $\ell>7$.

Note that $swe_{C_1}=(X^2+Y^2+2Z^2)(X+Y+2Z)$ and $swe_{C_2}=(X+Z)^2(X+Y+2Z)$.  $C_1$ is really just the concatenation of the code $\{(0),(1),(\omega),(\omega+1)\}$ with the code $\{(0,0),(1,1),(\omega,\omega),(\omega+1,\omega+1)\}$.  And $C_2$ is the concatenation of the code $\{(0,0),(\omega,0),(0,\omega),(\omega,\omega)\}$ with the code $\{(0),(1),(\omega),(\omega+1)\}$.
\end{exmp}

In the next example, we have a pair of codes for $p=2$ and $n=3$ which do not originate from the codes in Example~\ref{ex:length2codes}.

\begin{exmp}\label{ex:new-length3-codes}
Let $p=2$ and $\ell\equiv 7\text{ (mod }8)$.  Consider the following codes of length $n=3$.
$$C_1 = C(1,\omega,(1,1,1)),\,C_2=C(\omega,1,(1,1,\omega+1))$$

The codes, along with their weight enumerator polynomials and theta series for various levels $\ell$ are given in Table \ref{tab:new_example_p=2,n=3,ell=7}.  These codes, which have different weight enumerator polynomials, have the same theta series for $\ell=7$ and different theta series for $\ell>7$.

Further, note that these codes are not concatenations of pairs of shorter codes.  This follows from the fact that their symmetric weight enumerator polynomials cannot be factored into a product of polynomials with non-negative coefficients.  Hence, this pair of codes is fundamentally different from the pairs given earlier in this section.
\end{exmp}

Finally, we present an example of two codes with $\ell\equiv 3\text{ (mod }8)$, meaning the codes are defined over the field $\mathbb{F}_4$.
\begin{exmp}\label{ex:p=2,n=4,ell=3}
Let $p=2$ and $\ell\equiv 3\text{ (mod }8)$.  Consider the following codes of length $n=4$.
$$C_1 = C(1,1,(\omega,\omega,\omega,\omega)),\,C_2=C(1,1,(\omega,\omega,1,1))$$

The codes, along with their weight enumerator polynomials and theta series for various levels $\ell$ are given in Table \ref{tab:p=2,n=4,ell=3}.  These codes, which have different weight enumerator polynomials, have the same theta series for $\ell=3$ and different theta series for $\ell>3$.\end{exmp}

For $\ell=3$, the theta series associated to the monomials $Y$ (which corresponds to a 1 in a codeword) and $Z$ (which corresponds to a $\omega$ or $\omega+1$) are the same.  For the above example, note that, although these two codes have different weight enumerator polynomials, they have the same distribution of weights.  Still, this is our first example of two non-equivalent codes defined over a field with the same theta series.

\subsection{Toward examples for $p=3$}\label{sec:p=3} 
For $p=3$, we proceed as we did in Section~\ref{subsec:p=2} for $p=2$.  For a fixed level $\ell$ with $3\nmid\ell$, let $C$ be a code over $\R$ of length $n$.  The symmetric weight enumerator of $C$ is a polynomial of 4 variables $x,y,z,w$.  There are ${n+3\choose3}=(n+3)(n+2)(n+1)/6$ monomials of degree $n$ in 4 variables.  For the level $\ell$, calculate the theta function associated to each monomial $x^iy^jz^kw^l$, represent that power series as a vector of coefficients (truncated at some degree), create the matrix $M_{\ell,n}$ formed by such vectors, and then compute the dimension of its nullspace.

In the table below, the nullity of $M_{\ell,n}$ is given for $\ell\in\{7,11,19,23,31,35,43,47\}$ and $n\in\{1,2,3,4,5,6,7,8,9\}$.  In particular, note that each ``$0$'' in the table describes a situation in which we cannot have two non-equivalent codes with the same theta series.

\begin{center}
$\begin{array}{c|cccccccc}
    &          \ell=7  &  \ell=11   &     \ell=19   & \ell=23  &     \ell=31  &  \ell=35  &   \ell=43 &  \ell=47  \\ \hline \hline
   n=1  &  0 &   0 &      0 &   0 &     0 &   0 &     0 &   0 \\   
   n=2  &  1 &  0 &      0 &   0 &   0 &   0 &     0 &   0 \\  
   n=3  &  4 &  1 &     0 &   0 &   0 &   0 &     0 &   0 \\  
   n=4  &  11 &  5 &     0 &   0 &  0 &   0 &    0 &   0 \\ 
   n=5  &  24 &  14 &     0 &   0 &   0 &   0 & 0 &   0 \\
   n=6  &  44 &  30 &    4 &  2 &   0 &   0 &   0 &   0 \\ 
   n=7  &  72 &  54 &    16 &  9 &  0 &   0 &  0 &   0 \\
   n=8  &  109 & 87   & 38 &  25 &  5 &  2 & 1 & 2 \\
   n=9  &  156 & 130 & 72 & 53 & 20 & 8 & 4 & 8 \end{array}$\end{center}
The situation here is quite different from the $p=2$ case where for each length $n$ there are levels $\ell$ such that the nullity is zero.  In particular, for $p=3$ and $n=8$, there is a polynomial dependence relation for all levels $\ell$ given by
\begin{center}$\begin{array}{l}  2Y^8 + 2Y^2Z^6 + X^5YZW + X^2Y^4ZW + X^4Z^2W^2 + 10XY^3Z^2W^2 + 2Y^2W^6 \\ =
X^3Y^5 + 2X^3Y^2Z^3 + 4Y^5Z^3 + X^2YZ^4W + XZ^5W^2 + 2X^3Y^2W^3 + 4Y^5W^3 \\ + 2Y^2Z^3W^3 +X^2YZW^4 + XZ^2W^5.\end{array}$\end{center}
Thus, if one were to find two codes of length at least 8 defined over $\R=\mathbb{F}_3\times\mathbb{F}_3$ or $\mathbb{F}_9$ such that the difference of their symmetric weight enumerator polynomials is a multiple of the difference of these polynomials, then one would have a pair of codes with different weight enumerator polynomials and the same theta function for all levels $\ell$.

\subsubsection{From polynomials to codes for $p=3$}  As in Section~\ref{subsec:p=2-polys-to-codes}, we use our implementation in Sage for $p=3$ to search over codes of the form $$C(a_1,a_2,\mathbf{v}):=a_1\langle\mathbf{v}\rangle + a_2\langle\mathbf{v}\rangle^\perp$$ for $a_1,a_2\in \R$ and $\mathbf{v}\in\R^n$.  Note that for the dual space, one needs to know how complex conjugation behaves.  In $\O_K$, $\omega_\ell = \frac{-1+\sqrt{-\ell}}{2}$, so $\overline\omega_\ell = \frac{-1-\sqrt{-\ell}}{2}$.  Thus, $\omega_\ell + \overline{\omega_\ell} = -1$, so we have that $\overline{a+b\omega_\ell}=a-b-b\omega_\ell$.  Complex conjugation therefore behaves the same way in $\mathcal{R}$.

For small $n$ and any $\ell$, we can compute all codes of this form, their weight enumerator polynomials, and their corresponding theta functions.  The number of different weight enumerator polynomials, followed by the number of theta functions for each combination of $n$ and $\ell$, is given in the table below.

\begin{center}$\begin{array}{|c|c|c|c|c|}
\hline
p=3
& \ell=7 & \ell=11 & \ell=19 & \ell=23  \\ \hline
n=2 & 2,2 & 9,9 & 2,2 & 9,9 \\ \hline  
n=3 & 4,4 & 25,25 & 4,4 & 25,25 \\ \hline  
\end{array}$ 
\end{center}

As one can see, in this range there are no pairs of codes with different weight enumerator polynomials and the same theta function.  However, as we saw earlier, there are dependence relations between the monomials of degree 2 and 3 for $p=3$.  The problem is that the codes, being $\R$-submodules, have a lot of structure and thus lead to relatively few weight enumerator polynomials.

Below, we modify our search criteria slightly.

\subsubsection{Modified method - ``$\mathbb{F}_p$-submodule codes''}
If we loosen one restriction on our codes - namely, treating them as $\mathbb{F}_p$-submodules rather than as $\R$-submodules -- then we do find dependence relations among the resulting polynomials.  For $p$ prime, we will refer to these as ``$\mathbb{F}_p$-submodule codes.''

Note that these are no longer ``codes'' under the definition given in Section~\ref{sec:theta_functions_of_codes}, though any code under the standard definition is necessarily a $\mathbb{F}_p$-submodule code as well.  As we will see below, there do exist pairs of these structures with the same theta function.  One then has the following question.
\bigskip

{\bf Question:} For some prime $p$ and level $\ell$, given a pair of length $n$ non-equivalent ``$\mathbb{F}_p$-submodule codes,'' can one find a way to create a pair of codes with the same theta function?

\bigskip
To be precise, we consider $\mathbb{F}_p$-submodule codes of the form $$C(a_1,a_2,\mathbf{v}):=a_1\langle\mathbf{v}\rangle + a_2\langle\mathbf{v}\rangle^\perp$$ for $a_1,a_2\in \R$ and $\mathbf{v}\in\R^n$, where we let $\langle \mathbf{v}\rangle := \{c\mathbf{v} : c\in\mathbb{F}_p\}$ rather than considering all $c\in\mathcal{R}$.

For small $n$ and any $\ell$, we can compute all $\mathbb{F}_3$-submodule codes of this form, their weight enumerator polynomials, and their corresponding theta functions.  The number of different weight enumerator polynomials, followed by the number of theta functions for each combination of $n$ and $\ell$, is given in the table below.

\begin{center}$\begin{array}{|c|c|c|c|c|}
\hline
p=3, \mathbb{F}_3\text{-submodule codes} 
& \ell=7 & \ell=11 & \ell=19 & \ell=23  \\ \hline 
n=2 & 12,12 & 17,17 & 12,12 & 17,17 \\ \hline 
n=3 & 147,144 & 71,70 & 147,147 & 71,71 \\ \hline   
\end{array}$ 
\end{center}

For $n=3$, there are pairs of non-equivalent $\mathbb{F}_3$-submodule codes with the same theta function for $\ell=7$ and $\ell=11$.  These examples are presented below.  Following the notation of Example~\ref{ex:p=3-swe}, the symmetric weight enumerator polynomials have 4 variables, denoted $X,Y,Z,W$.

\begin{exmp}
Let $p=3$ and $\ell\equiv 7\text{ (mod }12)$.  Consider the following pairs of $\mathbb{F}_3$-submodule codes of length $n=3$.
$$C_{1,1} = C(\omega+1,1,(1,\omega,\omega+1)),\, C_{1,2} = C(\omega,\omega,(\omega+1,\omega+1,\omega+2)).$$ 
$$C_{2,1} = C(\omega,\omega+1,(\omega,\omega,\omega+2)), \, C_{2,2} = C(\omega,2\omega+1,(1,\omega+1,\omega+2)).$$ 
$$C_{3,1} = C(1,\omega,(1,\omega,\omega+1)), \, C_{3,2} = C(1, \omega, (1,1,\omega+2)).$$ 


These pairs of $\mathbb{F}_3$-submodule codes have the following symmetric weight enumerators and theta functions.
\begin{center}\begin{tabular}{rcl}
$swe_{C_{1,1}}$&$=$&$X^3 + 2Y^3 + 4XZ^2 + 4YZ^2 + 2Z^3 + 2XZW + $ \\ & & $ 6YZW + 2Z^2W + 2YW^2 + 2ZW^2.$ \\ \\
$swe_{C_{1,2}}$&$=$&$X^3 + 2XYZ + 2Y^2Z + 2XZ^2 + 4YZ^2 + 2Z^3 + $ \\ & & $ 2Y^2W + 2XZW + 4YZW + 2Z^2W + 4ZW^2,$ \\
$\th_{\Lambda_{7}(C_{1,1})}(q)$&$=$& $\th_{\Lambda_{7}(C_{1,2})}(q)$ \\
&$=$&$1 + 2q^3 + 4q^4 + 4q^5 + 12q^6 + 12q^7 + 8q^8 + 22q^9 + 42q^{10}+\dots$ \\ \\
$\th_{\Lambda_{19}(C_{1,1})}(q)$&$=$& $ 1 + 2q^3 + 6q^6 + 12q^9 + 4q^{10} +\dots$\\
$\th_{\Lambda_{19}(C_{1,2})}(q)$&$=$& $ 1 + 2q^6 + 2q^7 + 12q^9 + 6q^{10} +\dots$ 
\end{tabular}\end{center}

\begin{center}\begin{tabular}{rcl}
$swe_{C_{2,1}}$&$=$&$X^3 + 2XY^2 + 2XZ^2 + 4YZ^2 + 2Z^3 + 10YZW + $ \\ & & $4Z^2W + 2XW^2.$ \\
$swe_{C_{2,2}}$&$=$&$X^3 + 2X^2Z + 2Y^2Z + 4YZ^2 + 2Z^3 + 2XYW + $ \\ & & $8YZW + 4Z^2W + 2ZW^2.$ \\ \\
$\th_{\Lambda_{7}(C_{2,1})}(q)$&$=$& $\th_{\Lambda_{7}(C_{2,2})}(q)$ \\
&$=$&$1 + 2q^2 + 2q^4 + 8q^5 + 2q^6 + 20q^7 + 22q^8 + 6q^9 + 38q^{10}+\dots$ \\ \\
$\th_{\Lambda_{19}(C_{2,1})}(q)$&$=$& $ 1 + 2q^2 + 4q^5 + 2q^8 + 6q^9 + 2q^{10} +\dots$ \\
$\th_{\Lambda_{19}(C_{2,2})}(q)$&$=$& $ 1 + 2q^5 + 2q^7 + 4q^8 + 6q^9 + 4q^{10} +\dots$
\end{tabular}\end{center}

\begin{center}\begin{tabular}{rcl}
$swe_{C_{3,1}}$&$=$&$X^3 + 2XYZ + 2Y^2Z + 4XZ^2 + 2YZ^2 + 2Z^3 + 2Y^2W + $ \\ & & $6YZW + 4Z^2W + 2W^3
.$ \\
$swe_{C_{3,2}}$&$=$&$X^3 + 2XYZ + 4Y^2Z + 2XZ^2 + 2YZ^2 + 2Z^3 + 2XZW + $ \\ & & $4YZW + 4Z^2W + 2YW^2 + 2ZW^2.$ \\ \\
$\th_{\Lambda_{7}(C_{3,1})}(q)$&$=$& $\th_{\Lambda_{7}(C_{3,2})}(q)$ \\
&$=$&$1 + 2q^3 + 6q^4 + 2q^5 + 8q^6 + 16q^7 + 10q^8 + 20q^9 + 40q^{10}+\dots$ \\ \\
$\th_{\Lambda_{19}(C_{3,1})}(q)$&$=$& $ 1 + 2q^6 + 2q^7 + 12q^9 + 8q^{10} +\dots$ \\
$\th_{\Lambda_{19}(C_{3,2})}(q)$&$=$& $ 1 + 2q^6 + 4q^7 + 8q^9 + 10q^{10} +\dots$
\end{tabular}\end{center}

Note that for each pair $C_{i,1}$ and $C_{i,2}$, one has $swe_{C_{i,1}}\neq swe_{C_{i,2}}$, $\th_{\Lambda_{7}(C_{i,1})}(q)=\th_{\Lambda_{7}(C_{i,2})}(q)$, and $\th_{\Lambda_{7}(C_{i,1})}(q)\neq\th_{\Lambda_{7}(C_{i,2})}(q)$ for $\ell>7$.
\end{exmp}

\begin{exmp}
Let $p=3$ and $\ell\equiv 11\text{ (mod }12)$.  Consider the following $\mathbb{F}_3$-submodule codes of length $n=3$.
$$C_1 = C(\omega,2\omega+1,(0,1,1)),\, C_2=C(\omega,2\omega+1,(1,1,1)).$$

These codes have the following symmetric weight enumerator polynomials.
$$swe_{C_1} = X^3 + 2X^2Z + 4XZ^2 + 8Z^3 + 4XYW + 8YZW.$$
$$swe_{C_2} = X^3 + 2Y^3 + 6XZ^2 + 4Z^3 + 12YZW + 2W^3.$$

For $\ell=11$, we have 
$$\theta_{\Lambda_{11}(C_1)}(q)=\theta_{\Lambda_{11}(C_2)}(q)=1 + 2q^3 + 12q^6 + 40q^9 + 38q^{12} + 88q^{15} +\dots.$$
For $\ell>11$, $\theta_{\Lambda_\ell(C_1)}(q)\neq\theta_{\Lambda_\ell(C_2)}(q).$  In particular, 
$$\theta_{\Lambda_{23}(C_1)}(q)= 1 + 2q^6 + 14q^9 + 14q^{12} + 24q^{15} + \dots$$ and 
$$\theta_{\Lambda_{23}(C_2)}(q)= 1 + 2q^3 + 6q^6 + 12q^9 + 8q^{12} + 24q^{15} + \dots.$$
\end{exmp}

\subsubsection{The case of $p=5$}
Finally, we consider the case of $p=5$.  Working as in Section~\ref{sec:p=3}, we calculate the nullity of the matrix $M_{\ell,n}$ for small values of $\ell$ and $n$.

\begin{center}
$\begin{array}{c|cccccccc}
    &          \ell=3  &  \ell=7   &     \ell=11   & \ell=19  &     \ell=23  &  \ell=27  &   \ell=31 &  \ell=39  \\ \hline \hline
   n=1  &  4 &   0 &      0 &   0 &     0 &   0 &     0 &   0 \\   
   n=2  &  30 &  10 & 1 &   1 &   0 &   0 &     0 &   0 \\  
   n=3  &  131 &  91 &    51 & 19 &   1 &   2 &     1 &   1 \\  
  \end{array}$\end{center}

Searching becomes computationally more difficult as we consider larger primes. Because of this, for $p=5$, we slightly restrict our search space to begin with vectors in $\mathbb{F}_p^n$ rather than $\R^n$.  That is, we consider codes and $\mathbb{F}_5$-submodule codes of the form $a_1\langle\mathbf{v}\rangle+a_2\langle\mathbf{v}\rangle^\perp$ for $a_1,a_2\in\mathcal{R}$, $\mathbf{v}\in\mathbb{F}_p^n$.  The numbers of different symmetric weight enumerators, followed by the numbers of different theta series, are given in the tables below, first for the case of codes and second for the case of $\mathbb{F}_5$-submodule codes.

\begin{center}$\begin{array}{|c|c|c|c|c|c|}
\hline
p=5, \text{codes, using $\mathbf{v}\in\mathbb{F}_5^n$}
& \ell=3 & \ell=7 & \ell=11 & \ell=19 & \ell=23 \\ \hline 
n=1 & 1,1 & 1,1 & 3,3 & 3,3 & 1,1 \\ \hline
n=2 & 1,1 & 1,1 & 18,18 & 17,16 & 1,1  \\ \hline 
\end{array}$
\end{center}

\begin{center}$\begin{array}{|c|c|c|c|c|c|}
\hline
p=5, \mathbb{F}_5\text{-submodule codes, $\mathbf{v}\in\mathbb{F}_5^n$} 
& \ell=3 & \ell=7 & \ell=11 & \ell=19 & \ell=23  \\ \hline 
n=1 & 4,2 & 4,4 & 4,4 & 4,4 & 4,4 \\ \hline
n=2 & 72,20 & 72,71 & 72,71 & 59,58 & 72,72 \\ \hline 
\end{array}$
\end{center}

Note that, in the case of $n=2$ and $\ell=19$, we have two codes with different weight enumerator polynomials and the same theta function.  Also, for $n=2$ and $\ell=3,7,11,19$, we have pairs of $\mathbb{F}_5$-submodule codes with different weight enumerator polynomials and the same theta function.

When $p=5$, by Lemma~\ref{lemma-congruences}, the symmetric weight enumerator polynomial has $(5+1)^2/4=9$ variables, denoted $X_1,X_2,\dots,X_9$.
In particular, \\ $swe_C(X_1, X_2, X_3, X_4, X_5, X_6, X_7, X_8, X_9)= $
\begin{center}\begin{tabular}{l} $cwe_C(X_1,X_2,X_6,X_6,X_2,X_3,X_4,X_7,X_4,X_3,X_5,X_8,$ \\ \hspace{.4in}$X_8,X_5,X_9,X_5,X_9,X_5,X_8,X_8,X_3,X_3,X_4,X_7,X_4).$\end{tabular}\end{center}  This follows analogously to Example~\ref{ex:p=3-swe} where the case of $p=3$ is considered.  To evaluate the theta function, let $X_1=\th_{\La_{0,0}}(q), X_2=\th_{\La_{1,0}}(q), X_3=\th_{\La_{0,1}}(q), X_4=\th_{\La_{1,1}}(q), X_5=\th_{\La_{0,2}}(q), X_6=\th_{\La_{2,0}}(q), X_7=\th_{\La_{2,1}}(q), X_8=\th_{\La_{1,2}}(q), X_9=\th_{\La_{1,3}}(q).$

Note that in the table of $\mathbb{F}_5$-submodule codes for $n=2$ and $\ell=3$, there are 72 polynomials giving rise to just 20 theta functions.  This is because when $\ell=3$, there are dependence relations between the theta functions.  In particular, $\th_{\La_{1,0}}(q)=\th_{\La_{0,1}}(q)$, $\th_{\La_{2,0}}(q)=\th_{\La_{0,2}}(q)$, $\th_{\La_{1,1}}(q)=\th_{\La_{1,3}}(q)$, and $\th_{\La_{2,1}}(q)=\th_{\La_{1,2}}(q)$.  The examples below are for levels $\ell>3$, for which these theta functions are linearly independent.

First is an example of two non-equivalent codes defined over $\mathbb{F}_5\times\mathbb{F}_5$ with the same theta function for $\ell=19$.

\begin{exmp}\label{ex:p=5-codes-same-theta}Let $p=5$ and $\ell\equiv19\text{ (mod }20)$.  Consider the following codes of length $n=2$ defined over $\R=\mathbb{F}_5\times\mathbb{F}_5$.
$$C_1 = C(\omega,\omega,(0,1)),\, C_2 = C(\omega,\omega+1,(1,2)).$$
These codes have the following symmetric weight enumerator polynomials.

$$swe_{C_1} = X_1^2 + 4X_1X_3 + 4X_3^2 + 4X_1X_5 + 8X_3X_5 + 4X_5^2.$$
$$swe_{C_2} = X_1^2 + 8X_3X_5 + 4X_2X_6 + 8X_4X_8 + 4X_7X_9.$$

For $\ell=19$ we have $$\theta_{\lambda_{19}(C_1)}(q) = \theta_{\lambda_{19}(C_2)}(q)=1 + 4q^5 + 4q^{10} + 4q^{20} + 16q^{25} + 16q^{30} + 8q^{35} +\dots.$$
For $\ell>19$, $\theta_{\lambda_{19}(C_1)}(q) \neq \theta_{\lambda_{19}(C_2)}(q)$.  In particular, 
$$\theta_{\Lambda_{39}(C_1)}(q)=1 + 4q^{10} + 4q^{20} + 4q^{25} + 4q^{30} + 8q^{35} + 16q^{40} +\dots$$ and 
$$\theta_{\Lambda_{39}(C_2)}(q)= 1 + 4q^5 + 4q^{10} + 4q^{20} + 8q^{25} + 4q^{40} + 4q^{45}+ \dots.$$
\end{exmp}

Note that the two codes in the example above have different weight distributions of codewords.  The first code has eight codewords of weight 1, while the second code has none.

Finally, we give an example of two non-equivalent $\mathbb{F}_5$-submodule codes defined over $\mathbb{F}_5\times\mathbb{F}_5$ with the same theta function for $\ell=11$.

\begin{exmp}
Let $p=5$ and $\ell\equiv 11\text{ (mod }20)$.  Consider the following $\mathbb{F}_5$-submodule codes of length $n=2$ defined over $\R=\mathbb{F}_5\times\mathbb{F}_5$.
$$C_1 = C(1,3\omega+1,(1,3)),\, C_2=C(\omega+1,\omega+1,(0,1)).$$  We have the following symmetric weight enumerator polynomials.

$$swe_{C_1} = X_1^2 + 8X_3X_5 + 4X_2X_6 + 8X_4X_8 + 4X_7X_9.$$
$$swe_{C_2} = X_1^2 + 4X_1X_4 + 4X_4^2 + 4X_1X_8 + 8X_4X_8 + 4X_8^2.$$

For $\ell=11$, we have 
$$\theta_{\lambda_{11}(C_1)}(q)=\theta_{\lambda_{11}(C_2)}(q)=1 + 4q^5 + 4q^{10} + 8q^{15} + 20q^{20} + 16q^{25}+\dots.$$  
However, for $\ell>11$, $\theta_{\Lambda_\ell(C_1)}(q)\neq\theta_{\Lambda_\ell(C_2)}(q).$  In particular, 
$$\theta_{\Lambda_{31}(C_1)}(q)= 1 + 4q^5 + 4q^{10} + 4q^{20} + 8q^{25} +\dots$$ and 
$$\theta_{\Lambda_{31}(C_2)}(q)= 1 + 4q^{10} + 8q^{20} + 4q^{25} + \dots.$$
\end{exmp}

\section{Concluding remarks}
The examples presented in this paper are for $p=2,3,5$.  One can work with larger primes, though the number of computations grows in each case.  Based on the results here as well as some limited computations with $p=7$, it seems reasonable to expect that for any prime $p$, there will be pairs of $\mathbb{F}_p$-submodule codes with the same theta function.  It would then be interesting to see if there is a way to take such pairs to create pairs of actual codes with the same theta function.

\section*{Appendix - Full details for given examples with $p=2$} 
In this appendix, we give the specific data for the examples of pairs of non-equivalent codes with the same theta function for $p=2$.  Tables \ref{tab:chua-example}-\ref{tab:p=2,n=4,ell=3} contain the codes given in Examples \ref{ex:chua-length3-codes}-\ref{ex:p=2,n=4,ell=3}.

\begin{table}
\begin{tabular}{|l|l|}
\hline & \\
$\begin{array}{ll}C_1=& \omega\langle(0,1,1)\rangle+(\omega+1)\langle(0,1,1)\rangle^\perp\end{array}$ 
&
$\begin{array}{ll}C_2 = \omega\langle(0,0,1)\rangle+(\omega+1)\langle(0,0,1)\rangle^\perp\end{array}$
\\ & \\ 
$\begin{array}{ll} C_1=&\left\{(0,0,0), (0,\omega,\omega), (\omega+1,0,0), \right. \\ & (\omega+1,\omega,\omega), (0,\omega+1,\omega+1), \\ & (0,1,1), (\omega+1,\omega+1,\omega+1), \\ & \left. (\omega+1,1,1)\right\}\end{array}$
&
$\begin{array}{ll} C_2=&\left\{(0,0,0), (0,0,\omega), (\omega+1,0,0), \right. \\ & (\omega+1, 0, \omega), (0,\omega+1,0), \\ & (0,\omega+1,\omega), (\omega+1, \omega+1, 0), \\ & \left. (\omega+1, \omega+1, \omega)\right\}.\end{array}$
\\ & \\
$\begin{array}{lcl}swe_{C_1}&=&X^3+X^2Z+XY^2+\\ & & 2XZ^2+Y^2Z+2Z^3 \\ &=& (X^2+Y^2+2Z^2)(X+Z)\end{array}$
&
$\begin{array}{lcl}swe_{C_2}&=&X^3+3X^2Z+ 3XZ^2+Z^3 \\ &=&(X+Z)^3. \end{array}$
\\ & \\ 
\begin{tabular}{ll} $\theta_{\Lambda_7(C_1)}(q) =$ & $ 1 + 6q^2 + 24q^4 + 56q^6 + $ \\ & $ 114q^8 + 168q^{10}+ \dots$\end{tabular}
& 
\begin{tabular}{ll} $\theta_{\Lambda_7(C_2)}(q) =$ & $ 1 + 6q^2 + 24q^4 + 56q^6 + $ \\ & $ 114q^8 + 168q^{10}+ \dots$\end{tabular}
\\ & \\
\begin{tabular}{ll} $\theta_{\Lambda_{15}(C_1)}(q) =$ & $ 1 + 4q^2 + 8q^4 + 18q^6 + $ \\ & $ 36q^8 + 34q^{10} + \dots$\end{tabular}
&
\begin{tabular}{ll} $\theta_{\Lambda_{15}(C_2)}(q) =$ & $ 1 + 12q^4 + 6q^6 + 48q^8 + $ \\ & $54q^{10}+ \dots$\end{tabular}
\\ & \\ \hline
\end{tabular}\caption{Two nonequivalent codes of length $n=3$ over $\mathbb{F}_2\times\mathbb{F}_2$ with the same theta series for $\ell=7$ and different theta series for $\ell>7$.  As given in Example~\ref{ex:chua-length3-codes}.}\label{tab:chua-example}\end{table}

\begin{table}\begin{tabular}{|l|l|}
\hline & \\
$\begin{array}{lcl}C_1 &= &\omega\langle(1,1)\rangle + (\omega+1)\langle(1,1)\rangle^\perp\end{array}$ 
&
$\begin{array}{lcl}C_2 &= & \omega\langle(0,1)\rangle + (\omega+1)\langle(0,1)\rangle^\perp\end{array}$
\\ & \\ 
$\begin{array}{lcl}C_1 &=& \{(0,0), (1,1), (\omega, \omega), \\ & & (\omega +1, \omega + 1)\}\end{array}$
&
$\begin{array}{lcl}C_2 &=& \{(0,0), (0, \omega), (\omega+1, 0),\\ & &(\omega+1, \omega)\}\end{array}$
\\ & \\
$\begin{array}{lcl}swe_{C_1} &= & X^2 + Y^2 + 2Z^2\end{array}$
&
$\begin{array}{lcl}swe_{C_2} &=& X^2 + 2XZ + Z^2\\ & =& (X+Z)^2\end{array}$
\\ & \\ 
$\begin{array}{ll} \theta_{\Lambda_7(C_1)}(q) = & 1 + 4q^2 + 12q^4 + 16q^6 + \\ & 28q^8 + 24q^{10} + 48q^{12} + \dots\end{array}$
& 
$\begin{array}{ll} \theta_{\Lambda_7(C_2)}(q)  = & 1 + 4q^2 + 12q^4 + 16q^6 + \\ & 28q^8 + 24q^{10} + 48q^{12} + \dots\end{array}$
\\ & \\
$\begin{array}{ll} \theta_{\Lambda_{15}(C_1)}(q) = & 1 + 4q^2 + 4q^4 + 12q^8 + \\ & 24q^{10} + 8q^{12} + \dots\end{array}$
&
$\begin{array}{ll} \theta_{\Lambda_{15}(C_2)}(q)  = & 1 + 8q^4 + 4q^6 + 16q^8 + \\ & 20q^{10} + 4q^{12} + \dots\end{array}$
\\ & \\ \hline
\end{tabular}\caption{Two nonequivalent codes of length $n=2$ over $\mathbb{F}_2\times\mathbb{F}_2$ with the same theta series for $\ell=7$ and different theta series for $\ell>7$.  As given in Example~\ref{ex:length2codes}.}\label{tab:example_p=2,n=2,ell=7}\end{table}

\begin{table}\begin{tabular}{|l|l|}
\hline & \\
$\begin{array}{lcl}C_1 &= &\omega\langle(0,1,1)\rangle + 1\langle(0,1,1)\rangle^\perp\end{array}$ 
&
$\begin{array}{lcl}C_2 &= & 1 \langle(0,0,1)\rangle + \omega \langle(0,0,1)\rangle^\perp\end{array}$
\\ & \\ 
$\begin{array}{lcl}C_1 &=& \{(0, 0, 0), (0, \omega, \omega), (0, 1, 1), \\ & &(0, \omega + 1, \omega + 1), (\omega, 0, 0), \\ & &(\omega, \omega, \omega), (\omega, 1, 1), \\ & &(\omega, \omega + 1, \omega + 1), (1, 0, 0), \\ & &(1, \omega, \omega), (1, 1, 1), \\ & & (1, \omega + 1, \omega + 1), (\omega + 1, 0, 0),\\ & & (\omega + 1, \omega, \omega), (\omega + 1, 1, 1),\\ & & (\omega + 1, \omega + 1, \omega + 1)\}\end{array}$
&
$\begin{array}{lcl}C_2 &=&  \{(0, 0, 0), (0, 0, \omega), (0, 0, 1), \\ & & (0, 0, \omega + 1), (\omega, 0, 0), (\omega, 0, \omega), \\ & & 
(\omega, 0, 1), (\omega, 0, \omega + 1), (0, \omega, 0),\\ & &  (0, \omega, \omega), (0, \omega, 1), (0, \omega, \omega +1),\\ & &  (\omega, \omega, 0), (\omega, \omega, \omega), (\omega, \omega, 1), \\ & & (\omega, \omega, \omega + 1)\}\end{array}$
\\ & \\
$\begin{array}{lcl}swe_{C_1} &= & X^3 + X^2Y + XY^2 + \\ & & Y^3 + 2X^2Z + 2Y^2Z + \\ & & 2XZ^2 + 2YZ^2 + 4Z^3\end{array}$
&
$\begin{array}{lcl}swe_{C_2} &=& X^3 + X^2Y + 4X^2Z + \\ & & 2XYZ + 5XZ^2 + YZ^2 + \\ & & 2Z^3\end{array}$
\\ & \\ 
$\begin{array}{ll} \theta_{\Lambda_7(C_1)}(q) = & 1 + 2q + 8q^2 + 8q^3 + \\ & 34q^4 + 24q^5 + 88q^6 +\\ &  34q^7 + 172q^8+\dots\end{array}$
& 
$\begin{array}{ll} \theta_{\Lambda_7(C_2)}(q) = & 1 + 2q + 8q^2 + 8q^3 + \\ & 34q^4 + 24q^5 + 88q^6 +\\ &  34q^7 + 172q^8+\dots\end{array}$
\\ & \\
$\begin{array}{ll} \theta_{\Lambda_{15}(C_1)}(q) = & 1 + 2q + 4q^2 + 8q^3 +\\ & 10q^4 + 8q^5 + 28q^6 +\\ & 52q^8 + \dots\end{array}$
&
$\begin{array}{ll} \theta_{\Lambda_{15}(C_2)}(q)  = & 1 + 2q + 14q^4 + 16q^5 + \\ & 8q^6 + 8q^7 + 64q^8+ \dots\end{array}$
\\ & \\ \hline
\end{tabular}\caption{Two nonequivalent codes of length $n=3$ over $\mathbb{F}_2\times\mathbb{F}_2$ with the same theta series for $\ell=7$ and different theta series for $\ell>7$.  As given in Example~\ref{ex:concat_example_p=2,n=3,ell=7}.}\label{tab:concat_example_p=2,n=3,ell=7}\end{table}

\begin{table}\begin{tabular}{|l|l|}
\hline & \\
$\begin{array}{lcl}C_1 &= &1\langle(1,1,1)\rangle + \omega\langle(1,1,1)\rangle^\perp\end{array}$ 
&
$\begin{array}{lcl}C_2 &= & \omega \langle(1,1,\omega+1)\rangle + 1 \langle(1,1,\omega+1)\rangle^\perp\end{array}$
\\ & \\ 
$\begin{array}{lcl}C_1 &=& \{
(0, 0, 0), (\omega, \omega, \omega), (1, 1, 1), \\ & &
(\omega + 1, \omega + 1, \omega + 1), (\omega, \omega, 0), \\ & &(0, 0, \omega), (\omega + 1, \omega + 1, 1), \\ & &(1, 1, \omega + 1), (0, \omega, \omega), (\omega, 0, 0), \\ & &(1, \omega +1, \omega + 1), (\omega + 1, 1, 1), \\ & &(\omega, 0, \omega), (0, \omega, 0), \\ & &(\omega + 1, 1, \omega + 1), (1, \omega + 1, 1)
\}\end{array}$
&
$\begin{array}{lcl}C_2 &=&  \{
(0, 0, 0), (\omega + 1, \omega + 1, 0), (1, 1, 0), \\ & & (\omega, \omega, 0), (0, 0, \omega + 1), \\ & & (\omega + 1, \omega + 1, \omega + 1), (1, 1, \omega + 1), \\ & & (\omega, \omega, \omega + 1), (0, \omega, 1),\\ & &  (\omega + 1, 1, 1), (1, \omega + 1, 1), (\omega, 0, 1), \\ & & (0, \omega, \omega), (\omega + 1, 1, \omega), \\ & & (1, \omega + 1, \omega), (\omega, 0, \omega)
\}\end{array}$
\\ & \\
$\begin{array}{lcl}swe_{C_1} &= & X^3 + Y^3 + 3X^2Z + \\ & & 3Y^2Z  +3XZ^2 + \\ & & 3YZ^2 + 2Z^3\end{array}$
&
$\begin{array}{lcl}swe_{C_2} &=& X^3 + XY^2 + X^2Z + \\ & & 2XYZ + 3Y^2Z + \\ & & 4XZ^2 + 2YZ^2 + 2Z^3\end{array}$
\\ & \\ 
$\begin{array}{ll} \theta_{\Lambda_7(C_1)}(q) = & 1 + 6q^2 + 8q^3 + 48q^4 + \\ & 24q^5 + 88q^6 + 48q^7 + \\ & 138q^8 + 48q^9 + \dots\end{array}$
& 
$\begin{array}{ll} \theta_{\Lambda_7(C_2)}(q) = & 1 + 6q^2 + 8q^3 + 48q^4 + \\ & 24q^5 + 88q^6 + 48q^7 + \\ & 138q^8 + 48q^9 + \dots\end{array}$
\\ & \\
$\begin{array}{ll} \theta_{\Lambda_{15}(C_1)}(q) = & 1 + 8q^3 + 12q^4 + \\ & 30q^6 + 72q^8 + 24q^9 + \\ &  54q^{10} + \dots\end{array}$
&
$\begin{array}{ll} \theta_{\Lambda_{15}(C_2)}(q)  = & 1 + 4q^2 + 8q^4 + 8q^5 + \\ & 34q^6 + 8q^7 + 60q^8 + \\ & 32q^9 + 50q^{10}+ \dots\end{array}$
\\ & \\ \hline
\end{tabular}\caption{Two nonequivalent codes of length $n=3$ over $\mathbb{F}_2\times\mathbb{F}_2$ with the same theta series for $\ell=7$ and different theta series for $\ell>7$.  As given in Example~\ref{ex:new-length3-codes}.}\label{tab:new_example_p=2,n=3,ell=7}\end{table}

\begin{table}\begin{tabular}{|l|l|}
\hline & \\
$\begin{array}{lcl}C_1 &= &1\langle(\omega,\omega,\omega,\omega)\rangle + 1\langle(\omega,\omega,\omega,\omega)\rangle^\perp\end{array}$ 
&
$\begin{array}{lcl}C_2 &= & 1 \langle(\omega,\omega,1,1)\rangle + 1 \langle(\omega,\omega,1,1)\rangle^\perp\end{array}$
\\ & \\ 
$\begin{array}{lcl}swe_{C_1} &= & X^4 + 6X^2Y^2 + Y^4 + \\ & & 12X^2Z^2 + 24XYZ^2 +\\ & & 12Y^2Z^2 +
8Z^4 \end{array}$
&
$\begin{array}{lcl}swe_{C_2} &=& X^4 + 2X^2Y^2 + Y^4 +\\ & &  8X^2YZ + 8XY^2Z + \\ & & 8X^2Z^2 +
8XYZ^2 + 8Y^2Z^2 +\\ & &  8XZ^3 + 8YZ^3 + 4Z^4 \end{array}$
\\ & \\ 
$\begin{array}{ll} \theta_{\Lambda_3(C_1)}(q) = & 1 + 72q^2 + 192q^3 + 504q^4 + \\ &576q^5 + 2280q^6 + 1728q^7 \\ &
+ 4248q^8 + 4800q^9 + \dots\end{array}$
& 
$\begin{array}{ll} \theta_{\Lambda_3(C_2)}(q) = & 1 + 72q^2 + 192q^3 + 504q^4 + \\ &576q^5 + 2280q^6 + 1728q^7 \\ &
+ 4248q^8 + 4800q^9 + \dots\end{array}$
\\ & \\
$\begin{array}{ll} \theta_{\Lambda_{11}(C_1)}(q) = & 1 + 24q^2 + 24q^4 + \\ & 144q^6 + 192q^7 + 312q^8 + \\ & 384q^9 + \dots\end{array}$
&
$\begin{array}{ll} \theta_{\Lambda_{11}(C_2)}(q)  = & 1 + 8q^2 + 56q^4 + \\ & 64q^5 + 96q^6 + 128q^7 + \\ & 344q^8 + 320q^9 + \dots\end{array}$
\\ & \\ \hline
\end{tabular}\caption{Two nonequivalent codes of length $n=4$ over $\mathbb{F}_4$ with the same theta series for $\ell=3$ and different theta series for $\ell>3$.  As given in Example~\ref{ex:p=2,n=4,ell=3}.}\label{tab:p=2,n=4,ell=3}\end{table}

\end{document}